\documentclass[11pt]{amsart}
\linespread{1.1}

\usepackage[T1]{fontenc}

\usepackage{SSdefn, SSams}
\setcounter{tocdepth}{1}
\usepackage{eucal, mathtools}
\usepackage{tikz}
\usepackage{tikz-cd}

\frenchspacing

\newcommand{\FI}{\mathrm{FI}}
\newcommand{\ab}{\mathrm{ab}}
\newcommand{\aF}{\mathbb{F}}

\newcommand{\Irr}{\mathrm{Irr}}

\newcommand{\VI}{\mathrm{VI}}
\newcommand{\ModVI}{\Mod_{\VI}}

\newcommand{\newshift}{\bar{\Sigma}}

\newcommand{\sat}{\mathrm{sat}}

\newcommand{\adm}{\mathrm{adm}}

\newcommand{\he}{\hat{e}}

\newcommand{\gen}{\mathrm{gen}}

\newcommand{\rHom}{\rR\!\Hom}

\newcommand{\bmu}{\boldsymbol{\mu}}

\newcommand{\V}[1]{\pi^{-1}(#1)}

\DeclareMathOperator{\Ext}{Ext}
\DeclarePairedDelimiter\abs{\lvert}{\rvert}

\newcommand{\coloneq}{\mathrel{\mathop:}\mkern-1.2mu=}

\title{$\VI$-modules in non-describing characteristic, part II}



\author{Rohit Nagpal}
\address{Department of Mathematics, University of Chicago, Chicago, IL}
\email{\href{mailto:nagpal@math.uchicago.edu}{nagpal@math.uchicago.edu}}
\urladdr{\url{http://math.uchicago.edu/~nagpal/}}

\subjclass[2010]{%
	13D45, 
	20C33, 
	20J05 
}

\begin{document}


\begin{abstract}
We classify all irreducible generic $\VI$-modules in non-describing characteristic. Our result degenerates to yield a  classification of irreducible generic $\FI$-modules in arbitrary characteristic. Our result can also be viewed as a classification theorem for a natural class of representations of $\GL_{\infty}(\aF_q)$. 
\end{abstract}

\maketitle

\tableofcontents

\section{Introduction}

\subsubsection*{Notation} Set $\aF = \aF_q$, and let $\GL_n$ be the $n$th general linear group over $\aF$. Let $\bk$ be a field in which $q$ is invertible. 


A $\VI$-module $M$ is a functor $M \colon \VI \to \Mod_{\bk}$, where $\VI$ is the category of finite dimensional $\aF$-vector spaces with injective linear maps. Since $\GL_n$ acts on $M(\aF^n)$, we can view $M$ as a sequence whose $n$th member is a $\bk[\GL_n]$-module.  Let $\cI$ denote the left adjoint to the natural restriction \[\Mod_{\VI} \to \prod_{n \ge 0}\Mod_{\bk[\GL_n]}.\] We call $\VI$-modules of the form $\cI(\Theta)$ {\bf induced}, and we refer to $\VI$-modules admitting a finite filtration with induced graded pieces as {\bf semi-induced}.  The category $\Mod_{\VI}$ naturally contains a localizing subcategory $\Mod_{\VI}^{\tors}$ whose members are called {\bf torsion} $\VI$-modules.  We denote the Serre quotient category \[ \Mod_{\VI}/\Mod_{\VI}^{\tors} \] by $\Mod_{\VI}^{\gen}$ and refer to its objects as {\bf generic $\VI$-modules}. See \cite{VI1} for more on these definitions.


 In non-describing characteristic, both the categories $\Mod_{\VI}^{\tors}$ and $\Mod_{\VI}^{\gen}$ are of Krull dimension 0. Isomorphism classes $\Irr(\Mod_{\VI}^{\tors})$ of irreducible objects in $\Mod_{\VI}^{\tors}$ are easy to understand and are in a natural one-to-one correspondence with \[ \bigsqcup_{n \ge 0} \Irr(\Mod_{\bk[\GL_n]}).  \] Surprisingly, the two categories $\Mod_{\VI}^{\tors}$ and $\Mod_{\VI}^{\gen}$ are equivalent if $\bk$ is a field of characteristic 0;  see \cite[Theorem~3.6]{GLX}.  But this equivalence breaks down if $\bk$ is not of characteristic 0, the reason being --  $\Mod_{\VI}^{\tors}$ has finitely generated injective objects but $\Mod_{\VI}^{\gen}$ doesn't. We construct all irreducibles in $\Mod_{\VI}^{\gen}$ in non-describing characteristic. 

\begin{theorem}
	\label{intro:thm:irreducible-correspondence}
	\label{intro:thm:equivalent-characterizations-irreducible}
	Suppose $q$ is invertible in $\bk$. Then there is a natural one-to-one correspondence  \[ \rL \colon \bigsqcup_{n \ge 0} \Irr(\Mod_{\bk[\GL_n]}) \to \Irr(\Mod_{\VI}^{\gen}). \] Let $\Theta$ be an irreducible representation of $\GL_n$. The following are equivalent descriptions of $\rL(\Theta)$:
	\begin{enumerate}[\rm \indent (a)]
		\item $\rL(\Theta)$ is the socle of $ \cI( \Theta)$ in the category of generic $\VI$-modules. 
		
		\item $\rL(\Theta)$ is the generic $\VI$-module given by the kernel of the intersections of all maps from $ \cI( \Theta)$ to $\VI$-modules generated in degrees $<n$.
	\end{enumerate}
\end{theorem}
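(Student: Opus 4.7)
The plan is to reformulate (a) and (b) as parallel descriptions of a specific submodule of $\cI(\Theta)$, establish they coincide up to torsion, prove this defines a simple object of $\Mod_{\VI}^{\gen}$, and then verify bijectivity of $\rL$.

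For the equivalence of (a) and (b), let $K_0 \subseteq \cI(\Theta)$ denote the submodule appearing in (b). By the defining adjunction
\[
\Hom_{\VI}(\cI(\Theta), M) = \Hom_{\GL_n}(\Theta, M(\aF^n)),
\]
a map $f \colon \cI(\Theta) \to M$ with $M$ generated in degrees $<n$ is equivalent to a $\GL_n$-map $\Theta \to M(\aF^n)$, whose image lies in the $\GL_n$-subrepresentation of $M(\aF^n)$ spanned by elements transferred from lower degrees. Thus $K_0$ is the kernel of the canonical map from $\cI(\Theta)$ to the ``universal'' such target. I would then argue that, modulo torsion, $K_0$ contains every simple subobject of $\cI(\Theta)$ in $\Mod_{\VI}^{\gen}$: a simple subobject mapping nontrivially to some module generated in lower degrees would itself have to be supported in lower degrees, hence torsion, hence zero generically. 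Combined with a torsion-closure check showing $K_0$ itself is a subobject in $\Mod_{\VI}^{\gen}$, this identifies $K_0$ with the socle.

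The main step is proving the socle is simple. Here I would use the explicit formula
\[
\cI(\Theta)(\aF^m) = \bk[\Hom_{\VI}(\aF^n, \aF^m)] \otimes_{\bk[\GL_n]} \Theta,
\]
together with the Harish--Chandra-type composition series for this parabolically induced $\GL_m$-representation, available in non-describing characteristic. A nonzero generic subobject $N \subseteq \cI(\Theta)$ has a minimal degree $m \ge n$ where it is nonzero; using structural results from \cite{VI1} about how torsion interacts with induced $\VI$-modules, I would identify a canonical irreducible $\GL_m$-subrepresentation of $\cI(\Theta)(\aF^m)$ (the one ``coming from'' $\Theta$ via the Harish--Chandra filtration) that must appear in $N(\aF^m)$, and show that this canonical constituent is uniformly present across all nonzero generic subobjects. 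This forces the socle to be simple rather than merely semisimple. For the bijection, any irreducible $N \in \Mod_{\VI}^{\gen}$ has a well-defined minimal degree $n$ where it is nonzero and $N(\aF^n)$ is an irreducible $\bk[\GL_n]$-module $\Theta$; the adjunction produces a nonzero map $\cI(\Theta) \to N$, which is surjective modulo torsion by simplicity, so $N \cong \rL(\Theta)$, and injectivity of $\rL$ is clear by reading off $\Theta$ from the minimal-degree value.

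The main obstacle will be proving simplicity of the socle. The non-describing hypothesis enters essentially here, giving semisimplicity of $\bk[\GL_n]$-modules (so that $\Theta$ is well-defined as an isotypic) and a tractable structure theory for parabolically induced representations of finite general linear groups. The delicate technical point is showing that irreducible $\GL_m$-constituents of $\cI(\Theta)(\aF^m)$ other than the canonical one contribute only torsion to $\cI(\Theta)$ as a $\VI$-module — this requires careful bookkeeping of which $\GL_m$-constituents extend to $\VI$-subobjects modulo torsion, and is where the shift/induction machinery developed in part I should do the real work.
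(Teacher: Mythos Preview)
Your proposal has a genuine gap that undermines the core of the argument. You write that the non-describing hypothesis ``giv[es] semisimplicity of $\bk[\GL_n]$-modules''; this is false. Non-describing characteristic means only that $q$ is invertible in $\bk$, but $|\GL_n(\aF_q)| = q^{n(n-1)/2}\prod_{i=1}^n(q^i-1)$, and the factors $q^i-1$ may well be divisible by the characteristic of $\bk$. In that case $\bk[\GL_n]$ is not semisimple, there is no clean Harish--Chandra composition series for $\cI(\Theta)(\aF^m)$, and the ``canonical irreducible constituent'' you want to isolate need not exist as a subrepresentation. Your entire strategy for simplicity of the socle rests on this and therefore only works in characteristic~$0$, where the result was already known.

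The paper's approach is quite different and avoids the representation theory of $\GL_m$ almost entirely. It constructs, by explicit combinatorics, a descending chain $M_{d,\Theta}\supset M_{d+1,\Theta}\supset\cdots$ of nonzero submodules of $\cI(\Theta)$ (via certain operators $L_{\cV_f}=\prod_v(\id-\sigma_v)$ acting on basis elements) and proves two things: (i)~in \emph{arbitrary} characteristic, this chain is cofinal in the submodule lattice of $\cI(\Theta)$, i.e.\ every nonzero submodule contains some $M_{n,\Theta}$; and (ii)~in non-describing characteristic, the chain stabilizes modulo torsion, proved by averaging over certain unipotent subgroups whose orders are powers of $q$ and hence invertible. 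Together these force $\rT(M_{d,\Theta})$ to be the unique simple subobject of $\rT(\cI(\Theta))$. The non-describing hypothesis enters only through invertibility of $q$-powers, not through any semisimplicity.

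Two smaller issues: your injectivity argument (``reading off $\Theta$ from the minimal-degree value'') is unreliable, since the saturation $\cL(\Theta)$ can vanish in degrees up to roughly $2d$; the paper instead uses the $\delta$-invariant and the semi-induced filtration after sufficient shifting. And your (a)$\Leftrightarrow$(b) argument shows $\mathrm{soc}\subset K_0$ but not the reverse inclusion in the generic category; the paper handles this separately by showing $M_{d,\Theta}\subset K_0$ directly and then proving $K_0/M_{d,\Theta}$ is torsion via a shift-commutation lemma.
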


%
%
%
%
%

When $\bk$ is an algebraically closed field of characteristic 0, this description of $\rL(\Theta)$ becomes particularly simply, and is known to experts (and also follows from the equivalence in  \cite[Theorem~3.6]{GLX}). We now provide this description for completeness. Recall that the isomorphism classes of irreducible representations of $\GL_n$ are parametrized by partition valued functions --  Let $\cC_n$ be the isomorphism classes of cuspidal representations of $\GL_n$, and set $\cC = \sqcup_{n \ge 1} \cC_n$. If $\rho \in \cC_n$, we set $|\rho| = n$. Let $\cP$ be the set of partitions. Given a partition  $\lambda$, we set $|\lambda| = n$ if $\lambda$ is a partition of $n$. Given a function $\bmu \colon \cC \to \cP$, we set $|\bmu| = \sum_{x \in \cC} |x| |\bmu(x)|$.  The isomorphism classes of irreducible representations of $\GL_n$ are in bijection with the set of functions $\bmu$ satisfying $|\bmu| = n$. We fix an irreducible representation $\Theta_{\bmu}$ corresponding to each partition function $\bmu$. Let $\iota \in \cC_1$ be the trivial representation of $\GL_1$. For a partition function $\bmu$ with $\bmu(\iota) = \lambda$, we define another partition function $\bmu[n]$ by \[ \bmu[n](\rho) = \begin{cases}
(n - |\bmu|, \lambda_1, \lambda_2, \ldots) &\mbox{if } \rho = \iota \\
\bmu(\rho) &\mbox{if } \rho \neq \iota.
\end{cases} \] This definition makes sense only if $n \ge |\bmu| + \lambda_1$. The following result can be easily obtained using the results in  \cite{gan-watterlond-VI}.

\begin{theorem}[\cite{gan-watterlond-VI}]
	Suppose $\bk$ is a field of characteristic $0$. Let $\bmu$ be a partition valued function. Suppose $|\bmu| = d$. Suppose $\bmu(\iota) = \lambda$. Let $\cL(\Theta_{\bmu})$ denote the $\VI$-submodule of $\cI(\Theta_{\bmu})$ given by \[ \cL(\Theta_{\bmu})(\aF^n)  = \begin{cases} \Theta_{\bmu[n]}
	&\mbox{if } n \ge d + \lambda_1 \\
	0 &\mbox{otherwise.}
	\end{cases} \] Then the unique irreducible generic $\VI$-module $\rL(\Theta_{\bmu})$ is the image of $\cL(\Theta_{\bmu})$ in the generic category.
\end{theorem}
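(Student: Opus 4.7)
The plan is to invoke characterisation~(a) of Theorem~\ref{intro:thm:irreducible-correspondence}: $\rL(\Theta_{\bmu})$ is the socle of $\cI(\Theta_{\bmu})$ in the generic category. Thus it will suffice to exhibit $\cL(\Theta_{\bmu})$ as a $\VI$-submodule of $\cI(\Theta_{\bmu})$ whose image in $\Mod_{\VI}^{\gen}$ is a nonzero simple object; uniqueness of the socle then forces this image to coincide with $\rL(\Theta_{\bmu})$.

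First, I would decompose $\cI(\Theta_{\bmu})(\aF^n)$ as a $\GL_n$-representation. Standard identifications realise it as the Harish-Chandra induction of $\Theta_{\bmu} \boxtimes \mathbf{1}$ from the Levi $\GL_d \times \GL_{n-d}$. Under Zelevinsky's ring model for Harish-Chandra induction, this is a Pieri-type product that only affects the $\iota$-component: it decomposes as a multiplicity-free sum $\bigoplus_{\bnu} \Theta_{\bnu}$, where $\bnu(\rho) = \bmu(\rho)$ for $\rho \neq \iota$ and $\bnu(\iota)$ ranges over partitions obtained from $\lambda$ by adding a horizontal strip of size $n-d$. The partition $\bmu[n](\iota) = (n-d, \lambda_1, \lambda_2, \ldots)$ arises by placing the entire strip in a new top row; this is a valid horizontal strip precisely when $n \ge d + \lambda_1$. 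For such $n$ there is therefore a unique subspace $\cL(\Theta_{\bmu})(\aF^n) \subseteq \cI(\Theta_{\bmu})(\aF^n)$ isomorphic to $\Theta_{\bmu[n]}$, namely this isotypic summand.

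The main technical step is to verify that these isotypic summands assemble into a $\VI$-submodule, i.e.\ that the transition maps of $\cI(\Theta_{\bmu})$ carry $\cL(\Theta_{\bmu})(\aF^n)$ into $\cL(\Theta_{\bmu})(\aF^{n+1})$. This is where the results of \cite{gan-watterlond-VI} on the branching of Harish-Chandra induction in the $\VI$-module setting do the real work: they allow one to match the $\VI$-structure maps against the branching rule for $\GL_n \subset \GL_{n+1}$ and conclude that the induced map $\cL(\Theta_{\bmu})(\aF^n) \to \cI(\Theta_{\bmu})(\aF^{n+1})$ factors through the $\Theta_{\bmu[n+1]}$-isotypic summand and is nonzero. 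Since the source $\Theta_{\bmu[n]}$ is an irreducible $\GL_n$-module, any such nonzero $\GL_n$-equivariant map is automatically injective. I expect this branching compatibility to be the principal obstacle; everything else is essentially formal.

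Granted this, irreducibility of the image of $\cL(\Theta_{\bmu})$ in $\Mod_{\VI}^{\gen}$ is immediate. Any nonzero $\VI$-submodule $N \subseteq \cL(\Theta_{\bmu})$ satisfies $N(\aF^{n_0}) = \Theta_{\bmu[n_0]}$ for some $n_0$ by irreducibility of each level as a $\GL_{n_0}$-representation, and then injectivity of the transition maps propagates this to $N(\aF^m) = \cL(\Theta_{\bmu})(\aF^m)$ for every $m \ge n_0$; hence the quotient $\cL(\Theta_{\bmu})/N$ is torsion. The image is therefore simple and nonzero, and by Theorem~\ref{intro:thm:irreducible-correspondence}(a) it coincides with $\rL(\Theta_{\bmu})$.
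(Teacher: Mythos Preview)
The paper does not supply its own proof of this statement: it is quoted as a known result from \cite{gan-watterlond-VI}, serving as the characteristic-zero precursor that the paper's Theorem~\ref{intro:thm:irreducible-correspondence} generalises. There is therefore nothing in the paper to compare your argument against directly.

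That said, your sketch is correct and is essentially the argument one would give to verify the ``Moreover'' clause of Theorem~\ref{intro:thm:saturation}, namely that the explicit characteristic-zero description of $\cL(\Theta_{\bmu})$ coincides with the abstract one. The Pieri-type decomposition of $\cI(\Theta_{\bmu})(\aF^n)$ via Harish-Chandra induction is standard, and you correctly isolate the compatibility of the $\VI$-transition maps with the $\GL_n \subset \GL_{n+1}$ branching as the only nontrivial input---which is precisely what \cite{gan-watterlond-VI} provides. Your closing irreducibility argument is also fine: since each level $\Theta_{\bmu[n]}$ is an irreducible $\GL_n$-module and the transition maps are nonzero, any nonzero $\VI$-submodule of $\cL(\Theta_{\bmu})$ eventually fills out every degree, so the quotient is torsion and the image in $\Mod_{\VI}^{\gen}$ is simple; it then equals the (simple) socle $\rL(\Theta_{\bmu})$.

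One structural remark: you invoke Theorem~\ref{intro:thm:irreducible-correspondence}(a), which in the paper's logical order is only established in \S\ref{sec:classification}. This is not circular---the proof of Theorem~\ref{intro:thm:irreducible-correspondence} nowhere relies on the Gan--Watterlond statement---but it does mean your argument recovers the characteristic-zero theorem as a \emph{consequence} of the paper's general machinery rather than as an independent input. The original argument in \cite{gan-watterlond-VI} proceeds directly from the representation theory and does not require any such appeal.
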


Our results extend the theorem above to non-describing characteristic and also strengthen it in characteristic $0$. To be more precise, let $\rT \colon \Mod_{\VI} \to \Mod_{\VI}^{\gen}$ denote the {\bf localization functor}, and let $\rS \colon \Mod_{\VI}^{\gen} \to  \Mod_{\VI}$ be its right adjoint ({\bf the section functor}). We have the following result.

\begin{theorem}
	\label{intro:thm:saturation}
	Suppose $q$ is invertible in $\bk$.  Let $\Theta$ be an irreducible representation of $\GL_d$. Let $\cL(\Theta)$ denote the $\VI$-module given by the kernel of the intersections of all maps from $ \cI( \Theta)$ to semi-induced $\VI$-modules generated in degrees $<d$. Then we have the following: \begin{enumerate}[\rm \indent (a)]
		\item $\rS(\rL(\Theta)) = \cL(\Theta)$. In other words, the image of a nonzero  $\VI$-submodule $M \subset \cI(\Theta)$  is ismorphic to $\rL(\Theta)$ in the generic category if and only if $M \subset \cL(\Theta)$.
		\item $\cL(\Theta)$ is generated in degrees $\le 2d$. In fact, Castelnuovo--Mumford regularity of  $\cL(\Theta)$ is at most $2d$. 
		
		\item There is a polynomial $P$ of degree exactly $d$ such that \[ \dim_{\bk} \cL(\Theta)(\aF^n) = P(q^n)  \text{ for } n > 2d - 2.\] 
	\end{enumerate} Moreover, if $\bk$ is a field of characteristic $0$ and $\Theta = \Theta_{\bmu}$, then the two descriptions of  $\cL(\Theta_{\bmu})$ as in this theorem and the previous theorem agree.
\end{theorem}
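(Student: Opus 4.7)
The plan is to address the four assertions in order, leaning on Theorem~\ref{intro:thm:equivalent-characterizations-irreducible} as the base. For part (a), I would show $\rS(\rL(\Theta)) = \cL(\Theta)$ by establishing a universal property: both are the largest $\VI$-submodule of $\cI(\Theta)$ whose image in the generic category lies in the socle $\rL(\Theta)$. For the containment $\rT(\cL(\Theta)) \subseteq \rL(\Theta)$, the key is that semi-induced modules generated in degrees $<d$ have no generic simple subquotient isomorphic to $\rL(\Theta)$, a fact I would extract by iterating the filtration by induced pieces and tracking socles through the correspondence $\rL$. For the reverse, any $\VI$-submodule of $\cI(\Theta)$ strictly bigger than $\cL(\Theta)$ admits a non-zero map to some semi-induced $N$ generated in degrees $<d$; composing with $\rT$ shows its generic image meets composition factors outside $\rL(\Theta)$.

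For (b), the shift-functor machinery of \cite{VI1} should deliver both the generation degree and the Castelnuovo--Mumford regularity. The generation bound $2d$ should follow by analyzing $\cL(\Theta)$ after $d$ applications of the shift functor and exhibiting an explicit semi-induced cover generated in degrees $\le 2d$. The regularity bound is more delicate and is where I expect the main obstacle: one has to embed $\cI(\Theta)/\cL(\Theta)$ into a semi-induced module whose regularity is already controlled and chase the bound through the resulting short exact sequence in local cohomology. The precise value $2d$ suggests sharpness, typically achieved by a careful match-up of the shift-derivative exact sequences from \cite{VI1}.

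Part (c) is a Hilbert-function count on top of (a) and (b). The induced module has
\[
\dim_\bk \cI(\Theta)(\aF^n) \;=\; \frac{\dim_\bk \Theta}{|\GL_d(\aF)|}\prod_{i=0}^{d-1}(q^n - q^i),
\]
which is a polynomial in $q^n$ of degree exactly $d$ with positive leading coefficient. By (a) the quotient $\cI(\Theta)/\cL(\Theta)$ is generated in degrees $<d$, and by (b) its regularity is bounded in terms of $2d$, so semi-induced-module Hilbert-function estimates from \cite{VI1} force $\dim_\bk(\cI(\Theta)/\cL(\Theta))(\aF^n)$ to be polynomial in $q^n$ of degree $<d$ for $n>2d-2$. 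Subtracting yields the required polynomial $P$ of degree exactly $d$.

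Finally, for the characteristic-zero agreement, I would show that the Gan--Watterlond submodule of $\cI(\Theta_{\bmu})$ coincides with our $\cL(\Theta_{\bmu})$. Standard Harish-Chandra branching for $\GL_n(\aF_q)$ in characteristic zero shows that $\Theta_{\bmu[n]}$ does not occur in $N(\aF^n)$ for any $\VI$-module $N$ semi-induced from irreducibles of $\GL_m$ with $m<d$, which places the Gan--Watterlond submodule inside $\cL(\Theta_{\bmu})$. For the reverse, the previous theorem identifies the generic image of the Gan--Watterlond submodule with $\rL(\Theta_{\bmu})$, so by part (a) it must agree with $\rS(\rL(\Theta_{\bmu})) = \cL(\Theta_{\bmu})$; the quotient of the two submodules is then torsion and, sitting inside the torsion-free module $\cI(\Theta_{\bmu})$, must vanish.
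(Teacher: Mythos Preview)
There is a genuine gap in your argument for part (a). The fact that semi-induced modules generated in degrees $<d$ have no generic subquotient isomorphic to $\rL(\Theta)$ yields $\rL(\Theta) \subseteq \rT(\cL(\Theta))$, not the containment you label it as: it shows that the socle $\rL(\Theta)$ dies under every map $\rT(\cI(\Theta)) \to \rT(N)$ and hence lies in $\rT(\cL(\Theta))$. The hard direction is $\rT(\cL(\Theta)) \subseteq \rL(\Theta)$, and for that you must produce \emph{enough} maps from $\cI(\Theta)$ to semi-induced modules in degrees $<d$ so that their common kernel is no larger than $\rS(\rL(\Theta))$. Your ``reverse'' paragraph is tautological --- it restates the definition of $\cL(\Theta)$ --- and does not supply these maps; a priori there might be no nonzero such map at all, which would give $\cL(\Theta) = \cI(\Theta)$ while $\rS(\rL(\Theta)) \subsetneq \cI(\Theta)$.

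The paper closes this gap by going through the explicit submodule $M_{d,\Theta}$ of \S\ref{sec:cofinal}. A direct calculation with Lemma~\ref{lem:distinct-images} shows $M_{d,\Theta} \subset \cK_d(\cI(\Theta)) \subset \cL(\Theta)$. A separate lemma then proves that $\Sigma^m$ commutes with $\cK^{\sat}_d$ in the generic category; since $\Sigma^m(\cI(\Theta)/M_{d,\Theta})$ is semi-induced in degrees $<d$ for $m \gg 0$ (by the $\delta$-invariant proposition preceding Theorem~\ref{thm:irreducibles}), one obtains $\cK^{\sat}_d\Sigma^m(\cI(\Theta)/M_{d,\Theta}) = 0$ and hence $\cL(\Theta)/M_{d,\Theta}$ is torsion. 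The saturation statement then follows because $\cI(\Theta)/\cL(\Theta)$ embeds in a product of semi-induced modules and is therefore torsion-free. For (b) the paper does not run the shift-derivative chase you sketch; it quotes the local-cohomology bounds from \cite{VI1} and \cite{GL} and feeds them into \cite[Theorem~5.13]{VI1}. Your accounts of (c) and the characteristic-zero clause are actually more detailed than what the paper writes down.
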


\begin{remark}
	We note that the category of $\VI$-modules is locally noetherian; see  \cite{putman-sam} or \cite{catgb}. It follows that  $\cL(\Theta)$ is a finitely generated $\VI$-module.
\end{remark}

\begin{question}
\label{intro:question:saturation}	
	Suppose $q$ is invertible in $\bk$, and let $\cL(\Theta)$ be as in the theorem above. \begin{enumerate}[\rm \indent (a)]
		\item What is the precise degree of generation of $\cL(\Theta)$ as a function of $\Theta$? In characteristic 0, it is easy to see that the answer to this question is $|\bmu| + \lambda_1$. 
		
		\item We provide an explicit generator for the $\GL_n$ representation $\cL(\Theta)(\aF^n)$ for any $n \ge d + (1 + q + \cdots + q^{d-1})$. But we prove that $\cL(\Theta)$ is generated in degrees $\le d$. Can we construct an explicit generator for $\cL(\Theta)(\aF^{2d})$?
		 
		\item Is it true that, for any irreducible representation $\Theta$,  the $\GL_n$-representation $\cL(\Theta)(\aF^n)$ is  irreducible  for infinitely many $n$.
		\item Can we calculate the dimension of $\cL(\Theta)(\aF^n)$ as a function of $\Theta$?
		
		\item Can we calculate extensions $\Ext(\rL(\Theta_1), \rL(\Theta_2))$ explicitly? Or the local cohomology groups for $\rL(\Theta)$ explicitly?
	\end{enumerate}
\end{question}

\begin{remark}
	In characteristic 0,  answers to all the parts, except Part (e), of the question above can easily be spelled out. It is possible to follow arguments as in \cite{symc1} to answer part (e) to some extent but we do not pursue it in this paper. 
\end{remark}

\subsection{The main idea and a partial result in defining characteristic }  Let $(P, \le)$ be a poset. A subset $S \subset P$  is {\bf cofinal} if  for any $x \in P$ there is a $y \in S$  such that $x \le y$. By the {\bf submodule-lattice} of a module $M$, we mean the poset of submodules of $M$ under reverse inclusion. Let $\Theta$ be an irreducible representation of $\GL_d$. We provide an explicit construction of a sequence of nonzero submodules  \[M_{d, \Theta} \supset M_{d+1, \Theta} \supset M_{d+2, \Theta} \supset \ldots  \] of $\cI(\Theta)$ which, together with the $0$ submodule, form a cofinal subset in the submodule lattice of the induced module $\cI(\Theta)$.   The following result does not need the non-describing characteristic assumption. 

\begin{theorem}
	\label{intro:thm:cofinal}
	Let $\bk$ be a field of arbitrary characteristic. Let $\Theta$ be an irreducible representation of $\GL_d$ over $\bk$. Suppose $M \subset \cI(\Theta)$ is any nonzero submodule.  Then $M_{n, \Theta} \subset M$ for $n$ large enough. In particular, if $M(\aF^n) \neq 0$, then $M_{m, \Theta} \subset M$ for all $m \ge n$. 
\end{theorem}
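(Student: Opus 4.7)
The plan is to reduce the cofinality assertion to a single-generator statement. First, the ``in particular'' clause implies the main statement: if $M$ is nonzero, it has some nonzero value $M(\aF^n)$, and the second sentence then places $M_{n, \Theta}$ inside $M$. So I would focus on proving the second sentence. Picking a nonzero $x \in M(\aF^n)$ and letting $\langle x \rangle$ denote the $\VI$-submodule of $\cI(\Theta)$ it generates, I have $\langle x \rangle \subset M$. If I can show $M_{n, \Theta} \subset \langle x \rangle$ for every nonzero $x \in \cI(\Theta)(\aF^n)$, then the monotonicity of the chain $M_{d, \Theta} \supset M_{d+1, \Theta} \supset \ldots$ gives $M_{m, \Theta} \subset \langle x \rangle \subset M$ for all $m \ge n$ (note that $M(\aF^n) \ne 0$ forces $n \ge d$ since $\cI(\Theta)$ vanishes below degree $d$).

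Next, I would unpack the adjunction defining $\cI$ to obtain the explicit model $\cI(\Theta)(\aF^\ell) = \bk[\mathrm{Inj}(\aF^d, \aF^\ell)] \otimes_{\bk[\GL_d]} \Theta$, with the $\VI$-transition map associated to $\iota \colon \aF^\ell \hookrightarrow \aF^{\ell'}$ sending $[f] \otimes v$ to $[\iota \circ f] \otimes v$. In this model, for $\ell \ge n$, $\langle x \rangle(\aF^\ell)$ is the $\GL_\ell$-subrepresentation spanned by the images of $x$ under all injections $\aF^n \hookrightarrow \aF^\ell$, while $\langle x \rangle(\aF^\ell) = 0$ for $\ell < n$. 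The task is then to show $M_{n, \Theta}(\aF^\ell) \subset \langle x \rangle(\aF^\ell)$ for every $\ell \ge n$.

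The main step is a combinatorial averaging argument. The idea is to exhibit, for each $\ell \ge n$, a specific formal linear combination of injections $\aF^n \hookrightarrow \aF^\ell$ whose action on any nonzero $x$ produces a nonzero element of $M_{n, \Theta}(\aF^\ell)$. Irreducibility of $\Theta$ as a $\bk[\GL_d]$-module, coupled with a Schur-type analysis of the endomorphism-ring action on the $\Theta$-factor, then forces the $\GL_\ell$-span of such translates to fill out $M_{n, \Theta}(\aF^\ell)$. Repeating the argument for every $\ell \ge n$ delivers the containment $M_{n, \Theta} \subset \langle x \rangle$ as $\VI$-submodules, completing the reduction.

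The main obstacle is making this averaging argument uniform across all characteristics, since in the defining characteristic case (where $q$ is not invertible in $\bk$) the standard group-algebra averaging tricks introduce denominators that may vanish. The remedy has to be a characteristic-free combinatorial witness: an explicit integer-coefficient sum of injections that, when paired with the $\bk[\GL_d]$-action on the $\Theta$-factor, produces a nonzero element of $M_{n, \Theta}(\aF^\ell)$ for every nonzero $x$. Such a witness presumably rests on the explicit construction of $M_{n, \Theta}$ being tailored to match a specific stabilizer-orbit structure on the set of injections, and I expect the delicate combinatorics of the paper to live precisely in setting up that construction so as to make the non-vanishing transparent.
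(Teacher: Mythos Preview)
Your reduction to showing $M_{n,\Theta}\subset\langle x\rangle$ for a single nonzero $x\in\cI(\Theta)(\aF^n)$ is correct and is exactly how the paper begins, including the appeal to the descending-chain Proposition~\ref{prop:descending}. But the proposal stops short of the one idea that actually does the work, and it slightly misframes how that idea is deployed.

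You propose to work degree by degree, producing for each $\ell\ge n$ a linear combination of injections $\aF^n\hookrightarrow\aF^\ell$ landing in $M_{n,\Theta}(\aF^\ell)$. The paper does not do this. By definition $M_{n,\Theta}$ is generated by the elements $L_{\cV_{f_n}}([f_n^{\cE_{f_n}}]\otimes\theta)$, $\theta\in\Theta$, which all live in the single degree $n+\dim\cE_{f_n}$; so it suffices to place each such generator inside $M$. The mechanism that makes this possible is Lemma~\ref{lem:distinct-images}: the operator $L_{\cV_f}=\prod_{v\in\cV_f}(\id-\sigma_v)$, a product of $(\id-\text{transvection})$ factors in $\GL(\aF^n+\cE_{f_n})$, annihilates $[g^{\cE_f}]\otimes\theta$ exactly when $\im(g)\neq\im(f)$ and is nonzero on $[f^{\cE_f}]\otimes\theta$. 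This is the ``integer-coefficient witness'' you anticipate but do not supply, and without it there is no argument; the entire content of the theorem is in constructing $L_{\cV_f}$ and proving this separation property.

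Given that lemma the rest is short and does not use any Schur-type endomorphism analysis. Write $x=\sum_W[f_W]\otimes\theta_W$ with some $\theta_{W_0}\neq0$. Irreducibility is used only in the elementary form $\bk[\GL_d]\,\theta_{W_0}=\Theta$: for an arbitrary $\theta$, write $\theta=\sum_\sigma a_\sigma\sigma\theta_{W_0}$, lift each $\sigma\in\GL_d$ to $\tau_\sigma\in\GL_n$ with $\tau_\sigma f_{W_0}=f_n\sigma$ (transitivity on $d$-planes), and set $y=\sum_\sigma a_\sigma\tau_\sigma x\in M(\aF^n)$. Then $y=[f_n]\otimes\theta+(\text{terms with image }\neq\im f_n)$. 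Push $y$ into $\aF^n+\cE_{f_n}$ and apply $L_{\cV_{f_n}}$: the unwanted terms die and the generator $L_{\cV_{f_n}}([f_n^{\cE_{f_n}}]\otimes\theta)$ is left sitting in $M$. No averaging and no denominators occur anywhere, so the characteristic worry you raised never materializes.
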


We then use the non-describing characteristic assumption to show that this cofinal sequence stabilizes up to torsion, that is, it stabilizes in the Serre quotient category $\ModVI^{\gen}$. 

\begin{theorem}
	\label{intro:thm:collapse}
	Suppose $q$ is invertible in $\bk$. Let $\Theta$ be any representation of $\GL_d$. Then the descending chain $M_{d, \Theta} \supset M_{d+1, \Theta} \supset \ldots$ stabilizes in $\ModVI^{\gen}$. In fact, $M_{d,  \Theta}/M_{n,  \Theta}$ is supported in degrees $< n + q^{d(n-d)}(1 + q + \cdots + q^{d-1})$. In other words,  $M_{n, \Theta} = M_{d,  \Theta}$ in $\ModVI^{\gen}$ for each $n \ge d$. 
\end{theorem}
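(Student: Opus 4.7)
The strong form of the conclusion --- that $M_{d,\Theta}/M_{n,\Theta}$ vanishes in degrees $\ge n + q^{d(n-d)}[d]_q$, where $[d]_q \coloneq 1+q+\cdots+q^{d-1}$ --- implies the rest: the quotient is then torsion, so $M_{d,\Theta}$ and $M_{n,\Theta}$ agree in $\ModVI^{\gen}$, and telescoping along the chain gives the stabilization. I would therefore aim directly at this explicit support bound, by showing that for every $m \ge n + q^{d(n-d)}[d]_q$ the inclusion $M_{n,\Theta}(\aF^m) \hookrightarrow M_{d,\Theta}(\aF^m)$ is an equality. A small benefit of the cofinal statement of Theorem \ref{intro:thm:cofinal} is that it rules out pathological behaviour: we already know each $M_{n,\Theta}$ is nonzero and sits below every nonzero submodule eventually, so the question of stabilization in $\ModVI^{\gen}$ is really about producing equalities in every sufficiently large degree.

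The mechanism is averaging over linear subspaces, made possible by the non-describing hypothesis. Recall from the body of the paper that each $M_{n,\Theta}$ is generated in degree $n$ by the image of a canonical averaging operator on $\cI(\Theta)(\aF^n)$; because $q$ is a unit in $\bk$, this operator can be normalized to an idempotent. Given $x \in M_{d,\Theta}(\aF^m)$, the plan is to exhibit $x$ as a sum of elements of the form $\iota_*(v)$, where $\iota$ ranges over injections $\aF^n \hookrightarrow \aF^m$ and $v$ lies in the generating subspace of $M_{n,\Theta}(\aF^n)$. Such a decomposition is obtained by averaging $x$ over an explicit family of flags of subspaces inside $\aF^m$, and then dividing by the size of the family; that size is a product of $q$-powers and $q$-integers, hence a unit in $\bk$ exactly when $q$ is invertible there.

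The numerical bound originates in the flag counts: $q^{d(n-d)}$ counts complements of a fixed $d$-dimensional subspace inside an $n$-dimensional overspace, while $[d]_q$ counts lines in $\aF^d$. The main obstacle will be setting up the precise averaging identity and matching it to the stated bound. One has to track how the averaging defining $M_{n,\Theta}$ interacts with the transition maps $\cI(\Theta)(\aF^n) \to \cI(\Theta)(\aF^m)$, decompose the sum along $\GL_m$-orbits of the relevant flag configurations, and verify that the normalizing scalar that appears is a product of powers of $q$ and of $q$-integers (so a unit in $\bk$) rather than something that could vanish in the residue characteristic. Once that averaging identity is in place, the rest of the argument is a direct calculation inside the degree-$m$ piece of $\cI(\Theta)$, reading off that the inverse combination lies in $M_{n,\Theta}(\aF^m)$; the identity $m \ge n + q^{d(n-d)}[d]_q$ is used precisely to guarantee that there is enough room in $\aF^m$ for the chosen flags to be available without degeneration.
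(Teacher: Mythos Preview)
There is a genuine gap rooted in a misreading of the definition of $M_{n,\Theta}$. You write that ``each $M_{n,\Theta}$ is generated in degree $n$ by the image of a canonical averaging operator on $\cI(\Theta)(\aF^n)$,'' and later that $v$ should lie ``in the generating subspace of $M_{n,\Theta}(\aF^n)$.'' Neither is true. By construction, the generator $L_{\cV_{f_n}}\bigl([f_n^{\cE_{f_n}}]\otimes\theta\bigr)$ lives in $\cI(\Theta)(\aF^n\oplus\cE_{f_n})$, and $\dim_{\aF}\cE_{f_n}=|\cV_{f_n}|=q^{d(n-d)}[d]_q$. Thus $M_{n,\Theta}$ is generated in degree $n+q^{d(n-d)}[d]_q$, not in degree $n$; in fact $M_{n,\Theta}(\aF^n)=0$ whenever $d\ge 1$. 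This is why the support bound in the statement is what it is: it is literally the degree in which the generator of $M_{n,\Theta}$ sits, not the output of a flag-count governing ``room'' in $\aF^m$. Consequently, the proposed decomposition of $x\in M_{d,\Theta}(\aF^m)$ as a sum of $\iota_*(v)$ with $\iota\colon\aF^n\hookrightarrow\aF^m$ and $v\in M_{n,\Theta}(\aF^n)$ cannot work as stated. Relatedly, $L_{\cV_{f_n}}=\prod_{v}(\id-\sigma_v)$ is an alternating product of differences, not an averaging projector; there is no idempotent here to normalize.

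What the paper actually does is closer in spirit to your instinct, but the averaging is of a different kind and happens in a different place. One fixes $f=f_n\colon\aF^d\to\aF^n$ and the restriction $g\colon\aF^d\to W\cong\aF^d$, and works entirely inside the single space $\aF^n\oplus\cE_f$. There is a natural projection $\pi\colon\cV_f\to\cV_g$ (forget the complement $C'$), and for each fibre one forms a \emph{unipotent} subgroup $\cU_x\subset\GL(\aF^n\oplus\cE_f)$, of order a power of $q$. The key identity (Lemma~\ref{lem:unipotent-general}) is
\[
\frac{1}{|\cU_{\cV_g}|}\sum_{\sigma\in\cU_{\cV_g}}\sigma\,L_{\cV_f}\bigl([f^{\cE_f}]\bigr)=L_{D_{\cV_g}}\bigl([f^{\cE_f}]\bigr),
\]
where $D_{\cV_g}\subset\cV_f$ is a section of $\pi$. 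The right-hand side is exactly the pushforward, along an explicit injection $\aF^d\oplus\cE_g\hookrightarrow\aF^n\oplus\cE_f$, of the generator $L_{\cV_g}([g^{\cE_g}])$ of $M_{d,\Theta}$. Since the left-hand side visibly lies in $M_{n,\Theta}$, this shows that the image of the generator of $M_{d,\Theta}$ in degree $n+q^{d(n-d)}[d]_q$ already belongs to $M_{n,\Theta}$, and hence $M_{d,\Theta}/M_{n,\Theta}$ is supported below that degree. The non-describing hypothesis enters only through invertibility of $|\cU_{\cV_g}|$, a $q$-power. If you want to salvage your plan, the right move is to replace ``flags in $\aF^m$'' by this unipotent averaging on the auxiliary space $\aF^n\oplus\cE_f$, and to recognize the bound as the generation degree of $M_{n,\Theta}$ rather than a separate combinatorial constraint.
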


The two theorem above let us conclude that, in non-describing characteristic,  the image of $M_{n, \Theta}$ for any $n \ge d$ in the generic category is the irreducible $\rL(\Theta)$ from Theorem~\ref{intro:thm:equivalent-characterizations-irreducible}.  The claim that these form a complete set of irreducibles of the generic category then follows quite formally from the structure theory for $\VI$-modules developed in the first paper of this sequel. The novelty of this paper lies in the two theorems above whose proofs occupy \S \ref{sec:cofinal} and \S \ref{sec:stabilization} respectively.

We note that the theory of $\VI$-modules in defining characteristic is much harder and only very little of the structure theory is known. Our Theorem~\ref{intro:thm:cofinal} provides a hint in this direction. We pose some conjectures in equal characteristic. 

\begin{question}
	Suppose $\bk = \aF$. Let $\Theta$ be an irreducible representation of $\GL_d$. \begin{enumerate}[\rm \indent (a)]
		\item Is it true that \[\lim_{m \to \infty} \frac{1}{q^{dm}} \dim_{\bk} \left( \frac{M_{n, \Theta}}{M_{n+1, \Theta}} \right) (\aF^m) = 0?\]
		
		\item When $d = 1$, is it true that $\frac{M_{d, \Theta}}{M_{n, \Theta}}$ is a polynomial growth functor? If so, what is the degree?
		
		\item Can we classify all finitely generated polynomial growth functors? (It is proven in \cite{VI1} that any polynomial growth functor in non-describing characteristic is eventually constant. )
	\end{enumerate}
\end{question}

\subsection{The case of $\FI$-modules} For a partition $\mu$, define another partition $\mu[n]$ by \[ \mu[n] = 
(n - |\mu|, \mu_1, \mu_2, \ldots). \]  In characteristic 0,  Church--Ellenberg--Farb \cite{fimodules} showed that for every finitely generated $\FI$-module $M$, there is a finite set $F$ of partitions such that \[M_n  = \bigoplus_{\mu \in F} \bM_{\mu[n]}  \] for large enough $n$, where $\bM_{\lambda}$ denote the Specht module corresponding to the partition $\lambda$. Sam--Snowden \cite{symc1} showed a stronger result that \[ \cL(M_{\mu}) \coloneq \bigoplus_{n \ge |\mu| + \mu_1} M_{\mu[n]}  \] is an irreducible in the category of generic $\FI$-modules, and that all irreducibles in this category are of this form. This establishes a natural one-to-one correspondence between $\Irr(\Mod_{\FI}^{\gen})$ and $\bigsqcup_{n \ge 0} \Irr(\Mod_{\bk[S_n]})$. Church--Ellenberg--Farb's result holds in positive characteristic if we pass to the Grothendieck group and allow negative coefficients; see \cite{virtual-stab}. But away from characteristic 0, a classification of irreducibles for the category of generic $\FI$-modules was not known previously. Our method for classification of irreducibles for $\VI$-modules degenerates to yield the following result.

\begin{theorem}
	Suppose $\bk$ is an arbitrary field. Then there is a natural one-to-one correspondence between $\Irr(\Mod_{\FI}^{\gen})$ and $\bigsqcup_{n \ge 0} \Irr(\Mod_{\bk[S_n]})$. 
	
\end{theorem}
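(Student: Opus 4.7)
The plan is to observe that the entire $\VI$-module argument degenerates to the $\FI$ setting by formally specializing $q = 1$, i.e.\ by replacing $\aF_q$-vector spaces with finite sets and $\GL_n(\aF_q)$ with $S_n$. Under this dictionary, the hypothesis ``$q$ is invertible in $\bk$'' translates into the vacuous statement ``$1$ is invertible in $\bk$,'' which is the reason the $\FI$-result holds with no assumption on $\bk$ at all. The structure theory for $\FI$-modules (the $\FI$-analogue of the first paper in the sequel) is already established over an arbitrary base, so all the formal ingredients needed to mimic the $\VI$ argument are available.

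First I would set up the analogous construction. For an irreducible $\bk[S_d]$-module $\Theta$, let $\cI(\Theta)$ be the induced $\FI$-module, and construct a descending chain $M_{d,\Theta} \supset M_{d+1,\Theta} \supset \cdots$ of nonzero $\FI$-submodules of $\cI(\Theta)$ by mirroring the construction from \S\ref{sec:cofinal}, with unordered/ordered configurations of elements in a finite set replacing configurations of vectors in $\aF_q^n$. The proof of the cofinality statement (Theorem~\ref{intro:thm:cofinal}) is characteristic-free, so its $\FI$-degeneration immediately gives: for every nonzero submodule $M \subset \cI(\Theta)$, one has $M_{n,\Theta} \subset M$ for $n$ sufficiently large.

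Next I would carry over the proof of Theorem~\ref{intro:thm:collapse} to show that this chain stabilizes in $\Mod_{\FI}^{\gen}$. In the $\VI$ proof, the only place where ``$q$ invertible in $\bk$'' is genuinely needed is to invert certain scalars coming from orbit sums and from the semisimplicity of the finite general linear groups' mod-$\ell$ representation theory; when $q = 1$ the corresponding scalars in the $\FI$-setting are literally equal to $1$, so these steps become tautological. Combined with the cofinality result, this identifies the image of $M_{n,\Theta}$ in the generic category (for any $n \ge d$) with a well-defined generic $\FI$-module $\rL(\Theta)$, which is then irreducible because any proper submodule would violate cofinality up to torsion. Distinct $\Theta$'s yield distinct $\rL(\Theta)$'s because $\rL(\Theta)(\aF^d) = \Theta$ after applying the section functor in low degree, and surjectivity of $\rL$ follows from the $\FI$-analogue of the structure theory: every finitely generated generic $\FI$-module admits a nonzero map from some $\cI(\Theta)$ with $\Theta$ irreducible.

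The main obstacle is simply the bookkeeping of verifying that each lemma in \S\ref{sec:cofinal} and \S\ref{sec:stabilization} genuinely specializes to $q = 1$, and that no step secretly uses $q > 1$ (e.g.\ via a cuspidal decomposition or via a $q$-divisor in a denominator that would vanish at $q=1$). I expect this verification to be purely mechanical: all $q$-binomial coefficients reduce to ordinary binomial coefficients, all $q$-factorials to ordinary factorials, and every Harish-Chandra / parabolic induction appearing in the $\VI$ argument collapses to the familiar induction from Young subgroups. Once this dictionary is in place, the $\FI$ theorem is immediate.
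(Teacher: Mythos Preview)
Your proposal is correct and matches the paper's approach exactly: the paper itself states that no separate proof is given for $\FI$-modules because one obtains it by setting $q=1$ throughout, and that the needed structure theory for $\FI$-modules (the analogue of \cite{VI1}) is available over an arbitrary field. One small sharpening: the paper notes that Theorem~\ref{intro:thm:collapse} is not merely ``tautological after inverting scalars equal to $1$'' but is genuinely \emph{trivial} in the $\FI$ setting---in finite sets, complements are unique, so the fibers of $\pi \colon \cV_f \to \cV_g$ are singletons, the groups $\cU_x$ are trivial, and the whole averaging argument of \S\ref{sec:stabilization} collapses to an identity; your phrasing about ``orbit sums'' and ``semisimplicity'' slightly overstates what is actually being degenerated.
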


Our explicit description of the irreducibles in the generic category and  Theorem~\ref{intro:thm:equivalent-characterizations-irreducible} and Theorem~\ref{intro:thm:saturation} also degenerate to yield analogous results for $\FI$-modules in arbitrary characteristic. 

\begin{remark}
We do not provide separate proofs in the case of $\FI$-modules as they can easily be obtained by setting $q=1$ in our proofs for $\VI$-modules. We also note that   Theorem~\ref{intro:thm:collapse} is trivial in the case of $\FI$-modules but is one of the main technical results in this paper. Moreover, our argument for Theorem~\ref{intro:thm:cofinal} can be thought of as a $\GL$ version of some of the combinatorial results in \cite{castelnuovo-regularity}. 
	
	The analogue of Question~\ref{intro:question:saturation}, away from characteristic 0, is completely open for $\FI$-modules as well, except for Part (b) which has no content as the two functions become equal when we plug in $q = 1$. However, in characteristic 0, answer to this question and all the results in this paper are known for $\FI$-module; see \cite{symc1}.
\end{remark}

\subsection{Relations to $\GL_{\infty}$ representations and Deligne categories}
\label{sec:infinity-perspective}

The natural inclusion $\aF^n \to \aF^{n+1}$ of vector spaces induces a natural inclusion $\GL_n \to \GL_{n+1}$ of groups. By $\aF^{\infty}$ and $\GL_{\infty}$, we denote the direct limits  given by these inclusions. Let $P_n$ denote the subgroup of $\GL_{\infty}$ consisting of elements that fix $\aF^n \subset \aF^{\infty}$ pointwise. We call a $\bk[\GL_{\infty}]$-module $M$ {\bf admissible} if for each $x \in M$ there exists an $n$ such that every $\sigma \in  P_n$ fixes $x$. There is a natural equivalence of categories between $\Mod_{\VI}^{\gen}$ and the category  $\Mod_{\bk[\GL_{\infty}]}^{\adm}$ of admissible $\bk[\GL_{\infty}]$-modules. To see this, note that we have two functors \begin{align*}
\Psi \colon \Mod_{\VI} \to \Mod_{\bk[\GL_{\infty}]}^{\adm} \\
\Phi  \colon \Mod_{\bk[\GL_{\infty}]}^{\adm} \to \Mod_{\VI}
\end{align*} given by \begin{align*}
&\Psi(M) = \lim_n M(\aF^n) \\
&\Phi(M)(\aF^n) = M^{P_n}.
\end{align*} It is an easy verification that $\Psi$ factors through $\Mod_{\VI}^{\gen}$ and induces an equivalence $\Psi' \colon \Mod_{\VI}^{\gen} \to \Mod_{\bk[\GL_{\infty}]}^{\adm}$ where the inverse is obtained by composing $\Phi$ with the localization functor $\rT$. Thus the following result is a corollary of Theorem~\ref{intro:thm:irreducible-correspondence} and Theorem~\ref{intro:thm:saturation}: 

\begin{theorem}
	Suppose $q$ is invertible in $\bk$. Then we have a one-to-one correspondence \[\bigsqcup_{n \ge 0} \Irr(\Mod_{\bk[\GL_n]}) \to \Mod_{\bk[\GL_{\infty}]}^{\adm} \] given by \[\Theta \mapsto \lim_n \cL(\Theta)(\aF^n).\] Moreover, we have $\Phi \circ\Psi'(\rL(\Theta))  = \cL(\Theta)$.
\end{theorem}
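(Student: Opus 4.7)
The plan is to combine the equivalence $\Psi' \colon \Mod_{\VI}^{\gen} \xrightarrow{\sim} \Mod_{\bk[\GL_{\infty}]}^{\adm}$ recalled in the excerpt with Theorem~\ref{intro:thm:irreducible-correspondence} and Theorem~\ref{intro:thm:saturation}. Since $\Psi'$ is an equivalence of abelian categories, it restricts to a bijection between isomorphism classes of simple objects; composing with the classification map $\rL$ of Theorem~\ref{intro:thm:irreducible-correspondence} yields the asserted one-to-one correspondence between $\bigsqcup_{n \ge 0} \Irr(\Mod_{\bk[\GL_n]})$ and $\Irr(\Mod_{\bk[\GL_{\infty}]}^{\adm})$, which is the intended target of the displayed map in the statement. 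To identify this composite explicitly, I would use that by construction $\Psi = \Psi' \circ \rT$, so Theorem~\ref{intro:thm:saturation}(a) gives $\rT(\cL(\Theta)) = \rT\rS(\rL(\Theta)) = \rL(\Theta)$, and hence
\[
\Psi'(\rL(\Theta)) \;=\; \Psi'(\rT(\cL(\Theta))) \;=\; \Psi(\cL(\Theta)) \;=\; \lim_n \cL(\Theta)(\aF^n),
\]
which is the claimed formula for the bijection.

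For the moreover clause, I would show that $\Phi \circ \Psi' \cong \rS$ as functors $\Mod_{\VI}^{\gen} \to \Mod_{\VI}$, from which $\Phi\Psi'(\rL(\Theta)) = \rS(\rL(\Theta)) = \cL(\Theta)$ by a final application of Theorem~\ref{intro:thm:saturation}(a). The cleanest route is via uniqueness of right adjoints: $\rS$ is right adjoint to $\rT$ by definition, and one checks directly that $\Phi$ is right adjoint to $\Psi$ as functors between $\Mod_{\VI}$ and $\Mod_{\bk[\GL_{\infty}]}^{\adm}$. Indeed, any $\GL_\infty$-equivariant map $\Psi(M) \to N$ restricts along each canonical map $M(\aF^n) \to \Psi(M)$ to a $\GL_n$-equivariant map landing in $N^{P_n}$, because $P_n$ fixes $\aF^n$ pointwise and functoriality of $M$ forces the image of $M(\aF^n)$ in $\Psi(M)$ to be $P_n$-invariant; conversely, any compatible system $M(\aF^n) \to N^{P_n}$ assembles into a $\GL_\infty$-equivariant map out of $\Psi(M) = \colim_n M(\aF^n)$. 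Combining this adjunction with the equivalence $\Psi'$ produces natural isomorphisms
\[
\Hom_{\VI}(M,\Phi\Psi'(\mathcal{M})) \;\cong\; \Hom_{\adm}(\Psi(M),\Psi'(\mathcal{M})) \;\cong\; \Hom_{\gen}(\rT M,\mathcal{M}),
\]
so $\Phi\circ\Psi'$ is right adjoint to $\rT$ and is therefore canonically isomorphic to $\rS$.

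Overall, the argument is essentially a diagram chase once Theorem~\ref{intro:thm:saturation}(a) is granted; the only mild technical point is the verification of the adjunction $\Psi \dashv \Phi$, which is part of the \emph{easy verification} already appealed to in the construction of $\Psi'$, so I do not expect any serious obstacle.
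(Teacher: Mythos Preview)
Your argument is correct and is exactly the natural elaboration of what the paper leaves implicit: the paper gives no proof of this theorem at all, merely stating that it is ``a corollary of Theorem~\ref{intro:thm:irreducible-correspondence} and Theorem~\ref{intro:thm:saturation}'' together with the equivalence $\Psi'$. Your identification $\Phi\circ\Psi' \cong \rS$ via uniqueness of right adjoints is a clean way to extract the ``moreover'' clause, and the adjunction $\Psi \dashv \Phi$ you verify is indeed part of the ``easy verification'' the paper already invokes when constructing $\Psi'$.
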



\begin{remark}
	We do not talk about $\GL_{\infty}$-perspective for the rest of the paper as our results follow from the corresponding results on $\VI$-modules and the equivalence of category mentioned above. Our method also yield an analogues result for the infinite symmetric group.
\end{remark}

Deligne \cite{del}  constructed families of rigid symmetric Karoubian tensor  categories $\ul{\Rep}(S_t)$ for $t \in \bC$. These categories interpolate $\Rep(S_n)$ and can be thought of as the representation theory of the symmetric group in complex dimension $t$. Deligne and Milne \cite{deligne-milne} constructed the $\GL_{\infty}$ version  $\ul{\Rep}(\GL_t)$ of these interpolation categories. These categories are not abelian when $t$ is an integer. Comes and Ostrik \cite{comes-ostrik} constructed the abelian envelope $\ul{\Rep}^\ab(S_t)$ of $\ul{\Rep}(S_t)$.  Karoubian and abelian versions of Deligne categories have been constructed for several other sequences of groups, for example, $\GL(m|n)$ \cite{entova}. Deligne's construction is not very well-behaved away from characteristic 0 in the sense that it does not capture enough of the modular representation theory of the symmetric groups. Harman \cite{nate} constructed  $\ul{\Rep}_{\bk}(S_t)$ for $t \in \bZ_p$ over a field $\bk$ of characteristic $p$, resolving a conjecture of Deligne from one of his letter to Ostrik. This category captures more refined modular representation theory of symmetric groups and can be thought of as ``modular representation theory of symmetric groups in $p$-adic dimension''. At least in characteristic 0, a relation between $\Rep(S_{\infty})$ and $\ul{\Rep}^{\ab}(S_t)$ was provided in \cite{barter} where it was proven that there is an exact symmtric monoidal faithful functor \[\Rep^{\adm}(S_{\infty}) \to  \ul{\Rep}^{\ab}(S_t).\]

\begin{question}
	Can we construct a Deligne category for modular representation theory of general linear group in $p$-adic rank, at least in non-describing characteristic? If so, do we have an exact symmetric monoidal functor from $\Mod_{\bk[\GL_{\infty}]}^{\adm}$ to this Deligne category?
\end{question}


\subsection*{Acknowledgements}
We thank Inna Entova-Aizenbud and Steven V Sam for discussions on the $\GL_{\infty}$-perspective which is outlined in \S \ref{sec:infinity-perspective}.

\section{A cofinal sequence in the submodule-lattice of an induced module}
\label{sec:cofinal}


 Let $\Theta$ be an irreducible representation of $\GL_d$. Our aim in this subsection is to show that there is a natural sequence of nonzero submodules of $\cI(\Theta)$ \[M_{d, \Theta} \supset M_{d+1, \Theta} \supset M_{d+2, \Theta} \supset \ldots  \] which, together with the $0$ submodule, form a cofinal subset in the submodule lattice of the induced module $\cI(\Theta)$.   This result does not need non-describing characteristic assumption. In the next section, we shall use non-describing characteristic assumption to show that this sequence stabilizes up to torsion, that is, it stabilizes in the Serre quotient category $\ModVI^{\gen}$. 
 
 We recall that $\cI(\Theta) = \bk[\Hom_{\VI}(\aF^d, - )]\otimes_{\GL_d} \Theta$. In particular, any element of $\cI(\Theta)(X)$ can be written as a $\bk$-linear combination of elements of the form $[f] \otimes \theta$ where $f \colon \aF^d \to X$ is an $\aF$-linear injection and $\theta \in \Theta$. When $\Theta$ is the regular representation of $\GL_d$ then we denote $\cI(\Theta)$ by simply $\cI(d)$. In other words, we have $\cI(d) = \bk[\Hom_{\VI}(\aF^d, - )]$. To be able to define the submodule $M_{n, \Theta}$ we need some preliminaries:

Fix a morphism $f \colon \aF^d \to X$ in $\VI$. In other words, $f$ is an $\aF$-linear injective map from $\aF^d$ to a vector space $X$. Fix an ordered basis $(\cB_f, \prec)$ of the image $\im(f)$ of $f$. Denote by $\cV_f$ the set of triples $(\alpha, C, C')$ satisfying the following \begin{enumerate}[\rm \indent (a)]
	\item $\alpha  \in \cB_f$,
	\item $C$ is a complement of the line $\aF \alpha$ in $\im(f)$ satisfying $(\cB_f)_{\prec \alpha} \subset C$, and 
	\item $C'$ is a complement of $W$ in $V$. 
\end{enumerate} For $v \in \cV_f$, we denote the corresponding triple by $(\alpha_v, C_v, C'_v)$. Let $\cE_f \coloneq \sum_{v \in \cV_f} \aF e_{v}$ be the vector space freely generated by $\cV_f$.


For $v \in \cV_f$, let $\sigma_v \in \GL(X + \cE_f)$ be the element that takes $\alpha_v$ to $e_v$ and fixes the following pointwise: $\alpha_w - e_w$ for each $w \in \cV_f$, $C_v$ and $C'_v$. Let $<$ be a linear order on $\cV_f$ (which, for now, is independent of the order on $\cB_f$). For a subset $S$ of $\cV_f$, let $L_S^<$ be the descending product $\prod_{v \in S} (\id - \sigma_v)$. This implies that if $S'$ is an initial segment of $S$ then $L_S^< = L_{S \setminus S'}^< L_{S'}^<$. We will suppress the superscript `$<$' when the order is implicit. Note here that, given a $\VI$-module $M$ over $\bk$, $L_S$ induces a $\bk$-endomorphism on $M(X + \cE_f)$ which is functorial in $M$. 



Given a morphism $g \colon Y \to X$ and an object $E$ in $\VI$, we denote the direct sum of $g$ and the unique morphism $0 \to E$ by $g^{E}$. Thus $g^E$ is a morphism from $Y$ to $X + E$ whose image is contained in $X$. Let $g \colon Y \to X$ be a $\VI$-morphism, and let $w \in \cV_f$. By $\langle g, \alpha_w \rangle_w \colon Y \to \aF$ we denote the unique $\aF$-linear map such that for each $y \in Y$, $g(y)$ can be written as $\langle g(y), \alpha_w \rangle_w \alpha_w + c$ for some $c \in C_w + C'_w$.

\begin{lemma}
	\label{lem:inner-sum}
	Let $g \colon Y \to X$ be a $\VI$-morphism. Let $S$ be a subset of $\cV_f$. Then we have \begin{displaymath}
	\Big(\prod_{w \in S}\sigma_{w}\Big) g^{\cE_f} = g^{\cE_f} + \sum_{w \in S}\langle g, \alpha_{w} \rangle_{w}(e_{w} - \alpha_{w})
	\end{displaymath} where the product is descending.
\end{lemma}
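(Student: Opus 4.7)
The plan is to induct on $|S|$, driven by two defining features of the operators $\sigma_u \in \GL(X+\cE_f)$: each $\sigma_u$ fixes every vector of the form $e_w-\alpha_w$ pointwise, and on the ambient space $X$ itself $\sigma_u$ sends $\alpha_u \mapsto e_u$ while fixing the complement $C_u + C'_u$. The first property means that any summand $\langle g,\alpha_w\rangle_w(e_w-\alpha_w)$ already produced at a previous stage is undisturbed by later operators; the second isolates the single new summand contributed by each additional factor.

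The base case $S = \emptyset$ is vacuous. For the inductive step I take $u$ to be the maximum element of $S$ in the fixed linear order, write $S = S' \cup \{u\}$, and use the descending-product convention (compatible with the identity $L_S = L_{S\setminus S'} L_{S'}$ for an initial segment $S'$ recorded just before the lemma) to factor $\prod_{w\in S}\sigma_w = \sigma_u \cdot \prod_{w\in S'}\sigma_w$. Applying the inductive hypothesis to $S'$ first, and then $\sigma_u$, reduces the problem to computing $\sigma_u$ applied to $g(y) + \sum_{w\in S'}\langle g(y),\alpha_w\rangle_w(e_w-\alpha_w)$. The $S'$-sum is fixed termwise by the first property above. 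For the remaining $g(y) \in X$ term I use the direct sum decomposition $X = \aF\alpha_u \oplus C_u \oplus C'_u$ (valid because $C_u$ is a complement of $\aF\alpha_u$ in $\im(f)$ and $C'_u$ is a complement of $\im(f)$ in $X$) to write $g(y) = \langle g(y),\alpha_u\rangle_u \alpha_u + c$ with $c \in C_u+C'_u$, and then the second property gives $\sigma_u(g(y)) = g(y) + \langle g(y),\alpha_u\rangle_u(e_u - \alpha_u)$. Adding the two contributions yields exactly the summation indexed by $S = S'\cup\{u\}$.

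There is no substantive obstacle: once the two properties of $\sigma_u$ are noted, the computation is forced. The only delicate book-keeping is to match the ``descending product'' convention with the induction, which is handled by peeling off the maximum rather than the minimum. A pleasant corollary of the proof is that, even though the descending product depends a priori on the chosen order $<$ on $\cV_f$, the right-hand side is manifestly unordered, so the formula in fact holds for any linear order.
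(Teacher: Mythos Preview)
Your proof is correct and follows essentially the same approach as the paper: induction on $|S|$, using that each $\sigma_u$ fixes every $e_w-\alpha_w$ and, on $X$, satisfies $\sigma_u(g(y)) = g(y) + \langle g(y),\alpha_u\rangle_u(e_u-\alpha_u)$ via the decomposition $X = \aF\alpha_u \oplus C_u \oplus C'_u$. The paper is terser (it treats $|S|=1$ as the base case and then simply remarks that the general case follows since $\sigma_{w'}$ fixes $e_w-\alpha_w$), whereas you spell out the inductive step by peeling off the maximum element; your concluding observation about order-independence is exactly the content of the paper's next lemma.
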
 
\begin{proof} We prove the result by induction on the size of $S$. Let $y \in Y$. Then $g^{\cE_f}(y) = g(y) = \langle g(y), \alpha_{w} \rangle_{w} \alpha_{w} + c$ where $c \in C_{w} + C'_{w}$. This implies that \[\sigma_{w} g^{\cE_f}(y)   =  \langle g(y), \alpha_{w} \rangle_{w} e_{w} + c = \langle g(y), \alpha_{w} \rangle_{w} (e_{w} - \alpha_{w}) + g^{\cE_f}(y).\] Thus we have $\sigma_{w} g^{\cE_f}    = \langle g, \alpha_{w} \rangle_{w} (e_{w} - \alpha_{w}) + g^{\cE_f}$ proving the result for $n = 1$. The general result follows from it because $\sigma_{w'}$ fixes $(e_{w} - \alpha_{w})$ for each $w' \in S$.
\end{proof}

\begin{lemma}
	\label{lem:order-independence}
	Let $g \colon Y \to X$ be a morphisms in $\VI$. Let $<_1, <_2$ be two linear orders on $\cV_f$ and $S$ be a subset of $\cV_f$. Then $L_S^{<_1}([g^{\cE_f}]) = L_S^{<_2}([g^{\cE_f}])$.
\end{lemma}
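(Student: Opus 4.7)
The strategy is to reduce to Lemma~\ref{lem:inner-sum} by distributing the alternating product defining $L_S^<$. Concretely, if $v_1 <\cdots< v_k$ lists $S$ in increasing order, then
\[
L_S^< \;=\; (\id-\sigma_{v_k})\cdots(\id-\sigma_{v_1}) \;=\; \sum_{T\subseteq S}(-1)^{|T|}\prod_{v\in T,\,\text{desc }<}\sigma_v,
\]
where on the right the product over $T$ is also taken in descending order with respect to $<$. Applying this expansion to the class $[g^{\cE_f}]$ yields
\[
L_S^<\bigl([g^{\cE_f}]\bigr) \;=\; \sum_{T\subseteq S}(-1)^{|T|}\Bigl[\Bigl(\prod_{v\in T,\,\text{desc }<}\sigma_v\Bigr)g^{\cE_f}\Bigr].
\]

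It then suffices to show that, for each fixed subset $T\subseteq S$, the vector $\bigl(\prod_{v\in T,\,\text{desc}}\sigma_v\bigr)g^{\cE_f}$ does not depend on which linear order on $\cV_f$ is used to form the descending product. But this is essentially free from Lemma~\ref{lem:inner-sum}: its proof is valid for any linear order on $\cV_f$, and its right-hand side
\[
g^{\cE_f} \;+\; \sum_{w\in T}\langle g,\alpha_w\rangle_w\,(e_w - \alpha_w)
\]
does not mention the order at all. Taking the signed sum over $T\subseteq S$ of these identities, computed once with respect to $<_1$ and once with respect to $<_2$, then yields the desired equality $L_S^{<_1}([g^{\cE_f}]) = L_S^{<_2}([g^{\cE_f}])$.

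The main (modest) subtlety worth flagging is that the $\sigma_v$ really do not commute in $\GL(X+\cE_f)$, so $\prod_{v\in T,\,\text{desc }<_1}\sigma_v$ and $\prod_{v\in T,\,\text{desc }<_2}\sigma_v$ are genuinely different group elements in general; what Lemma~\ref{lem:inner-sum} records is that they nevertheless act identically on the particular vector $g^{\cE_f}$. Beyond invoking that lemma once for each $T$, no further combinatorial input should be required.
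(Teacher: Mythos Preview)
Your proposal is correct and matches the paper's approach exactly: the paper's proof is the single sentence ``This is immediate from Lemma~\ref{lem:inner-sum},'' and your write-up simply unpacks that immediacy by expanding $L_S^<$ as an alternating sum over subsets $T\subseteq S$ and observing that the right-hand side of Lemma~\ref{lem:inner-sum} is order-independent. Your remark that the $\sigma_v$ need not commute as automorphisms but nonetheless act identically on $g^{\cE_f}$ regardless of order is a helpful clarification of why the lemma is needed at all.
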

\begin{proof} This is immediate from Lemma~\ref{lem:inner-sum}. 
\end{proof}
%
%


We now define $M_{n, \Theta}$ for an arbitrary representation $\Theta$ of $\GL_d$. Let $f_n \colon \aF^d \to \aF^n$ be the natural inclusion -- the map that takes the standard basis of $\aF^d$ to an initial segment of the standard basis of $\aF^n$. We assume that $(\cB_{F_n}, \prec)$ is this initial segment, and define $\cV_{f_n}, \cE_{f_n}, L_{\cV_{f_n}}$ as in the previous subsection. We define $M_{n, \Theta}$ to be the $\VI$-submodule of $\cI(\Theta)$ generated by elements of the form $L_{\cV_{f_n}}([f_n^{\cE_{f_n}}] \otimes \theta)$ where $\theta \in \Theta$. Note here that that $L_{\cV_{f_n}}([f_n^{\cE_{f_n}}] \otimes \theta) = L_{\cV_{f_n}}([f_n^{\cE_{f_n}}])\otimes \theta$.


\begin{proposition}
	\label{prop:descending}
	We have $M_{n, \Theta} \supset M_{n+1, \Theta}$.
\end{proposition}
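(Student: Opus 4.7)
The plan is to realize each generator of $M_{n+1,\Theta}$ as a $\bk$-linear combination of $\VI$-morphism images of a generator of $M_{n,\Theta}$. Write $z_m \coloneq L_{\cV_{f_m}}([f_m^{\cE_{f_m}}] \otimes \theta)$; since $M_{m,\Theta}$ is generated by these elements as $\theta$ varies over $\Theta$, it suffices to prove $z_{n+1} \in M_{n,\Theta}$ for each $\theta \in \Theta$. To set up, I partition $\cV_{f_{n+1}} = \cV^{(0)} \sqcup \cV^{(1)}$, where $\cV^{(0)} \coloneq \{(\alpha, C, C') \in \cV_{f_{n+1}} : e_{n+1} \in C'\}$, and observe that $(\alpha, C, C') \mapsto (\alpha, C, \aF e_{n+1} + C')$ defines a bijection $\cV_{f_n} \to \cV^{(0)}$, $w \mapsto v(w)$, satisfying $\alpha_{v(w)} = \alpha_w$, $C_{v(w)} = C_w$, and $C'_{v(w)} = \aF e_{n+1} + C'_w$. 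I then define a $\VI$-morphism
\[
g \colon \aF^n + \cE_{f_n} \longrightarrow \aF^{n+1} + \cE_{f_{n+1}}
\]
by taking the natural inclusion on $\aF^n$ and setting $g(e_w) = e_{v(w)}$ on $\cE_{f_n}$; this is injective because the image vectors form part of a basis of the target.

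The core identity to establish is $g_*(z_n) = L_{\cV^{(0)}}([f_{n+1}^{\cE_{f_{n+1}}}] \otimes \theta)$. Using Lemma~\ref{lem:inner-sum} to expand
\[
z_n = \sum_{T \subset \cV_{f_n}} (-1)^{|T|} \Bigl[ f_n^{\cE_{f_n}} + \sum_{w \in T} \langle f_n, \alpha_w \rangle_w (e_w - \alpha_w) \Bigr] \otimes \theta,
\]
applying $g$ termwise using $g \circ f_n^{\cE_{f_n}} = f_{n+1}^{\cE_{f_{n+1}}}$ and $g(e_w - \alpha_w) = e_{v(w)} - \alpha_{v(w)}$, and then invoking Lemma~\ref{lem:inner-sum} in reverse on the larger space, the identity reduces to checking the scalar equality $\langle f_n, \alpha_w \rangle_w = \langle f_{n+1}, \alpha_{v(w)} \rangle_{v(w)}$. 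For $y \in \aF^d$, both sides compute the $\alpha_w$-component of $y$ in the decomposition $\aF^d = \aF\alpha_w \oplus C_w$: the $C'$-contribution is annihilated by $C'_w \cap \aF^d = 0$ and $C'_{v(w)} \cap \aF^d = 0$, while the internal complement $C_{v(w)}$ equals $C_w$.

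Finally, I fix a linear order on $\cV_{f_{n+1}}$ in which $\cV^{(0)}$ appears as an initial segment. By the factorisation property of descending products stated just before Lemma~\ref{lem:inner-sum}, $L^<_{\cV_{f_{n+1}}} = L^<_{\cV^{(1)}} L^<_{\cV^{(0)}}$, and Lemma~\ref{lem:order-independence} allows me to use this order to evaluate $z_{n+1}$, yielding $z_{n+1} = L_{\cV^{(1)}}(g_*(z_n))$. Since $L_{\cV^{(1)}}$ is a $\bk$-linear combination of maps induced by elements of $\GL(\aF^{n+1} + \cE_{f_{n+1}})$, and $g_*(z_n)$ is a $\VI$-morphism image of $z_n$, this exhibits $z_{n+1}$ as a $\bk$-linear combination of $\VI$-morphism images of $z_n$, so $z_{n+1} \in M_{n,\Theta}$. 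The main obstacle is the bookkeeping in the core identity, but it reduces directly to Lemmas~\ref{lem:inner-sum} and~\ref{lem:order-independence} once the bijection $\cV_{f_n} \to \cV^{(0)}$ and the morphism $g$ are in place.
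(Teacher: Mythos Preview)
Your proof is correct and follows essentially the same route as the paper's. The paper packages the argument as a slightly more general lemma (Lemma~\ref{lem:descending-modules}) for arbitrary $f\colon\aF^d\to X$, $g\colon\aF^d\to Y$ with a compatible $h\colon X\to Y$, and then derives Proposition~\ref{prop:descending} by specializing to the inclusion $\aF^n\hookrightarrow\aF^{n+1}$; in that setting the paper's injection $\phi\colon\cV_f\to\cV_g$, $(\alpha,C,C')\mapsto(\alpha,C,C'+\aF e_{n+1})$, is exactly your bijection $\cV_{f_n}\to\cV^{(0)}$, and the paper's $\wt{h}$ is your $g$. The only cosmetic difference is that the paper verifies the intertwining identity $\wt{h}\circ\sigma_w=\sigma_{\phi(w)}\circ\wt{h}$ directly on a spanning set, whereas you expand both sides via Lemma~\ref{lem:inner-sum} and compare; both computations establish the same core identity $g_*(z_n)=L_{\cV^{(0)}}([f_{n+1}^{\cE_{f_{n+1}}}]\otimes\theta)$ and then finish with the initial-segment factorization.
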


The proposition above follows immediate from this more general lemma.

\begin{lemma} 
	\label{lem:descending-modules}
	Suppose $\dim_{\aF} Y \ge \dim_{\aF} X$. Let $f \colon \aF^d \to X$, $g \colon \aF^d \to Y$ be morphisms in $\VI$. If there exists a $\VI$-morphism $h \colon X \to Y$ that takes $\cB_f$ to $\cB_g$ in an order preserving way and satisfies $g = h \circ f$, then $L_{\cV_f}([f^{\cE_f}])$ generate $L_{\cV_{g}}([g^{\cE_g}])$ in $\cI(d)$.
\end{lemma}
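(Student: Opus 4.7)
The plan is to lift $h$ to a $\VI$-morphism $\hat h \colon X + \cE_f \to Y + \cE_g$ so that pushforward along $\hat h$ carries $L_{\cV_f}([f^{\cE_f}])$ to $L_{S}([g^{\cE_g}])$ for a suitable subset $S \subset \cV_g$, and then to obtain the full product $L_{\cV_g}([g^{\cE_g}])$ by applying further $\VI$-endomorphisms. First fix a complement $D$ of $h(X)$ in $Y$, which exists because $\dim_\aF Y \ge \dim_\aF X$. Define an injection $\phi \colon \cV_f \to \cV_g$ by the rule $(\alpha, C, C') \mapsto (h(\alpha), h(C), h(C') + D)$. One checks that $\phi$ is well-defined as a map to $\cV_g$: the image element $h(\alpha)$ lies in $\cB_g$ because $h$ restricts to an order-preserving bijection $\cB_f \to \cB_g$; the space $h(C)$ is a complement of $\aF h(\alpha)$ in $\im(g) = h(\im f)$ containing $(\cB_g)_{\prec h(\alpha)} = h((\cB_f)_{\prec \alpha})$; and $h(C') + D$ is a complement of $\im(g)$ in $Y$ using the decomposition $Y = \im(g) \oplus h(C') \oplus D$. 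Extend $h$ to $\hat h$ by $\hat h(e_v) = e_{\phi(v)}$; this is injective because $\hat h(X) \subset Y$, $\hat h(\cE_f) \subset \cE_g$, and $\phi$ is injective (the latter from the injectivity of $h$ together with $h(X) \cap D = 0$).

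The core identity to verify is $\hat h \circ \sigma_v = \sigma_{\phi(v)} \circ \hat h$ for every $v \in \cV_f$. Checking this on a spanning set of $X + \cE_f$ is straightforward on $\alpha_v$ (both sides give $e_{\phi(v)}$) and on $C_v \cup C'_v$ (where $\sigma_v$ acts as the identity and $\hat h$ maps into $C_{\phi(v)} \cup C'_{\phi(v)}$, on which $\sigma_{\phi(v)}$ is also the identity). For $e_w$ with $w \in \cV_f$, use that $\sigma_v$ fixes $\alpha_w - e_w$ to rewrite $\sigma_v(e_w) = \sigma_v(\alpha_w) - \alpha_w + e_w$; since $\alpha_w \in \cB_f \subset \im(f)$, decompose $\alpha_w = a \alpha_v + c$ with $c \in C_v$, so that $\sigma_v(e_w) = a e_v + c - \alpha_w + e_w$. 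Applying $\hat h$ gives $a e_{\phi(v)} + h(c) - \alpha_{\phi(w)} + e_{\phi(w)}$. Carrying out the analogous computation in $\cV_g$ with the decomposition $\alpha_{\phi(w)} = h(\alpha_w) = a \alpha_{\phi(v)} + h(c)$ and $h(c) \in C_{\phi(v)}$ produces exactly the same vector. Consequently, by induction on $|\cV_f|$ with respect to any fixed linear order, $\hat h_*\bigl(L_{\cV_f}([f^{\cE_f}])\bigr) = L_{\phi(\cV_f)}([g^{\cE_g}])$, with the order on $\phi(\cV_f)$ transported from $\cV_f$.

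To finish, choose a linear order on $\cV_g$ in which $\phi(\cV_f)$ is an initial segment extending the transported order. The initial-segment factorization gives $L_{\cV_g}([g^{\cE_g}]) = L_{\cV_g \setminus \phi(\cV_f)}\bigl(L_{\phi(\cV_f)}([g^{\cE_g}])\bigr)$, and Lemma~\ref{lem:order-independence} ensures the left-hand side is independent of the choice of order. The outer operator $L_{\cV_g \setminus \phi(\cV_f)}$ expands as a $\bk$-linear combination of products $\prod_{u \in A} \sigma_u$, each of which is a $\VI$-endomorphism of $Y + \cE_g$. Therefore $L_{\cV_g}([g^{\cE_g}])$ lies in the $\VI$-submodule of $\cI(d)$ generated by $L_{\phi(\cV_f)}([g^{\cE_g}]) = \hat h_*\bigl(L_{\cV_f}([f^{\cE_f}])\bigr)$, which in turn lies in the submodule generated by $L_{\cV_f}([f^{\cE_f}])$.

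The main obstacle is the basis-level verification of the intertwining $\hat h \circ \sigma_v = \sigma_{\phi(v)} \circ \hat h$; everything else is functorial bookkeeping. The crucial design choice that makes this verification work is defining $\phi$ so that $h$ maps the triple $(\alpha_v, C_v, C'_v)$ compatibly into $(\alpha_{\phi(v)}, C_{\phi(v)}, C'_{\phi(v)})$, which explains why we need the hypothesis that $h$ is order-preserving on the distinguished bases and why the complement $D$ is absorbed into the $C'$-component rather than, say, mixed with the basis-image.
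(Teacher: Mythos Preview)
Your proof is correct and follows essentially the same approach as the paper: define an injection $\phi\colon\cV_f\to\cV_g$ via $(\alpha,C,C')\mapsto(h(\alpha),h(C),h(C')+D)$, lift $h$ to $\hat h$ on the $\cE$-summands along $\phi$, verify the intertwining $\hat h\circ\sigma_v=\sigma_{\phi(v)}\circ\hat h$ on a spanning set, then use the initial-segment factorization of $L_{\cV_g}$ together with Lemma~\ref{lem:order-independence}. The only cosmetic difference is that the paper checks the intertwining on the vectors $\alpha_w-e_w$ (which are fixed by every $\sigma_v$, so the check is a one-liner), whereas you check on $e_w$ directly, which costs you an extra decomposition $\alpha_w=a\alpha_v+c$ but is equally valid.
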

\begin{proof} Let $K$ be a complement of $\im(h)$ in $Y$.  Let $\phi \colon \cV_f \to \cV_g$ be the injective map taking $(\alpha, C, C')$ to $(h(\alpha), h(C), h(C') + K)$. Fix a linear order $<_1$ on $\cV_f$ and let $<_2$ be a linear order on $\cV_g$ such that $\phi$ is order preserving and $\im(\phi)$ is an initial segment. Let $\wt{\phi} \colon \cE_f \to \cE_g$ be the natural injection induced by $\phi$, that is, we have $\wt{\phi}(e_v) = e_{\phi(v)}$. Denote the direct sum map $(h + \wt{\phi}) \colon (X + \cE_f) \to (Y + \cE_g)$ by $\wt{h}$. We claim that $\wt{h} \circ \sigma_v = \sigma_{\phi(v)}\circ \wt{h}$ as a linear map from $(X + \cE_f) \to (Y + \cE_g)$. It suffices to check that $\wt{h} \circ \sigma_v(t) = \sigma_{\phi(v)} \circ \wt{h}(t)$ for $t$ in a spanning set for $X + \cE_f = \aF \alpha_v + C_v +C'_v + \cE_f$. First suppose $t = \alpha_v$. Then we have \[\wt{h} \circ \sigma_v(\alpha_v) = \wt{h}(e_v) = \wt{\phi}(e_v) = e_{\phi(v)} = \sigma_{\phi(v)}(\alpha_{\phi(v)}) =  \sigma_{\phi(v)}(h(\alpha_v)) = \sigma_{\phi(v)} \circ \wt{h}(\alpha_v).   \]  Now suppose $t \in C_v + C'_v$. Then we have \[ \wt{h} \circ \sigma_v(t) = \wt{h}(t) = h(t) = \sigma_{\phi(v)}(h(t)) = \sigma_{\phi(v)} \circ \wt{h}(t). \] Finally, suppose $t = \alpha_w - e_w$ for some $w \in \cE_f$. Then we have \[ \wt{h} \circ \sigma_v(\alpha_w - e_w) = \wt{h}(\alpha_w - e_w) = h(\alpha_w) - \wt{\phi}(e_w) = \alpha_{\phi(w)} - e_{\phi(w)} =   \sigma_{\phi(v)}(\alpha_{\phi(w)} - e_{\phi(w)}) = \sigma_{\phi(v)} \circ \wt{h}(\alpha_w - e_w). \] We have proven that $\wt{h} \circ \sigma_v(t) = \sigma_{\phi(v)} \circ \wt{h}(t)$ holds for $t$ in a spanning set for $X + \cE_f$, and so the claim holds.

	Now note that $\wt{h} \circ f^{\cE_f} = (h \circ f)^{\cE_g} = g^{\cE_g}$. Thus the claim in the paragraph above implies that $\wt{h}_{\star}(L_{\cV_f}^{<_1}([f^{\cE_f}])) = L_{\im(\phi)}^{<_2}([g^{\cE_g}])$. This shows that $L_{\cV_f}^{<_1}([f^{\cE_f}])$ generates $L_{\im(\phi)}^{<_2}([g^{\cE_g}])$. Since $\im(\phi)$ is an initial segment of $\cV_g$, we have $L_{\cV_g}^{<_2}([g^{\cE_g}]) = L_{\cV_g \setminus \im(\phi)}^{<_2}L_{\im(\phi)}^{<_2}([g^{\cE_g}])$ which shows that $L_{\im(\phi)}^{<_2}$ generates  $L_{\cV_g}^{<_2}([g^{\cE_g}])$. Thus $L_{\cV_f}^{<_1}([f^{\cE_f}])$ generates $L_{\cV_g}^{<_2}([g^{\cE_g}])$. This is independent of the orders $<_1, <_2$ by Lemma~\ref{lem:order-independence}, completing the proof.
\end{proof}

%

We now recall and prove the following theorem from the introduction (copy of Theorem~\ref{intro:thm:cofinal}).

\begin{theorem}
	\label{thm:cofinal}
	 Suppose $\bk$ is an arbitrary field. Let $\Theta$ be an irreducible representation of $\GL_d$. Suppose $M \subset \cI(\Theta)$ is any nonzero submodule.  Then $M_{n, \Theta} \subset M$ for $n$ large enough. In particular, if $M(\aF^n) \neq 0$, then $M_{m, \Theta} \subset M$ for all $m \ge n$.
\end{theorem}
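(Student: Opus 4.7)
The plan is to establish the stronger ``in particular'' statement, from which the first assertion of the theorem follows immediately. Since $\cI(\Theta)$ is torsion-free (being generated by injections), $M$ is also torsion-free, so $M(\aF^n)\neq 0$ propagates to $M(\aF^m)\neq 0$ for every $m\geq n$. Thus it suffices to prove: whenever $M(\aF^n)\neq 0$, one has $M_{n,\Theta}\subset M$; applying this at each degree $m\geq n$ in turn then yields the ``in particular'' statement, since $M_{m,\Theta}\subset M_{n,\Theta}$ by Proposition~\ref{prop:descending}.

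First I would pick a nonzero $x \in M(\aF^n)$. As a $\bk$-vector space, $\cI(\Theta)(\aF^n)$ decomposes as a direct sum $\bigoplus_W \Theta_W$ indexed by the $d$-dimensional subspaces $W\subset\aF^n$, with $\GL_n$ permuting the summands transitively. Replacing $x$ by a suitable $\GL_n$-translate (possible since $M$ is $\GL_n$-stable), I arrange that the projection of $x$ onto $\Theta_{W_0}$ is a nonzero $\theta_0\in\Theta$, where $W_0 := f_n(\aF^d)$. Next I push $x$ forward along the natural inclusion $\iota\colon \aF^n\to \aF^n+\cE_{f_n}$ and apply the operator $L_{\cV_{f_n}}=\prod_{w\in\cV_{f_n}}(\id-\sigma_w)$. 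Since each $\sigma_w\in\GL(\aF^n+\cE_{f_n})$ and $M$ is stable under the $\GL$-action at every degree, the result $L_{\cV_{f_n}}(\iota_*x)$ lies in $M(\aF^n+\cE_{f_n})$. Writing $x = c\,[f_n]\otimes\theta_0 + \sum_i c_i[f_i]\otimes\theta_i$ with each $f_i$ having $\im(f_i)\neq W_0$, and expanding termwise via Lemma~\ref{lem:inner-sum} and Lemma~\ref{lem:order-independence}, many contributions drop: $L_{\cV_{f_n}}([f_i^{\cE_{f_n}}])=0$ whenever some $\langle f_i,\alpha_w\rangle_w$ vanishes. The main surviving summand is $c\cdot L_{\cV_{f_n}}([f_n^{\cE_{f_n}}])\otimes\theta_0$, a generator of $M_{n,\Theta}$.

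The main obstacle is to separate this main contribution from surviving error contributions. The key observation is that the basis expansion of $L_{\cV_{f_n}}([f_n^{\cE_{f_n}}])$ is supported on injections into $\aF^d+\cE_{f_n}$ (the images land in $W_0+\cE_{f_n}$), while any surviving basis element from a term $f_i$ with $\im(f_i)\neq W_0$ has image that meets the complement of $W_0$ in $\aF^n$. I would handle this by induction on the number of distinct images in the support of $x$: at each step, apply a further $L$-operator built from an alternative basis of $\im(f_i)$ and an alternative choice of complement $C'$, and use the transitivity of the relevant $\GL$-action to cancel one more class of error terms per round. Once a pure generator $L_{\cV_{f_n}}([f_n^{\cE_{f_n}}])\otimes\theta_0$ is established in $M$, the identification $[f_n^{\cE_{f_n}}]\otimes g\theta_0 = [f_n^{\cE_{f_n}}\circ g]\otimes\theta_0$ together with Lemma~\ref{lem:descending-modules} and the irreducibility of $\Theta$ propagates it to every generator $L_{\cV_{f_n}}([f_n^{\cE_{f_n}}]\otimes\theta)$, $\theta\in\Theta$, giving $M_{n,\Theta}\subset M$.
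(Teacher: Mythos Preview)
Your overall setup is right, but you are missing the one lemma that makes the argument collapse to a single step, and your proposed workaround is both unnecessary and not obviously correct.

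The point you overlook is that $L_{\cV_{f_n}}$ kills \emph{every} term $[g^{\cE_{f_n}}]\otimes\theta'$ with $\im(g)\neq W_0$, not just those for which ``some $\langle g,\alpha_w\rangle_w$ vanishes.'' The reason is that $\cV_{f_n}$ ranges over \emph{all} complements $C'$ of $W_0$ in $\aF^n$: since $\dim\im(g)=\dim W_0$ and $\im(g)\neq W_0$, there is an $\alpha\in\cB_{f_n}$ with $\alpha\notin\im(g)$, and one can choose $C,C'$ so that $\im(g)\subset C\oplus C'$; for this $v=(\alpha,C,C')$ the element $\sigma_v$ fixes $g^{\cE_{f_n}}$ (and all the $e_w-\alpha_w$), hence $(\id-\sigma_v)$ annihilates the term. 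This is exactly the content of the paper's Lemma~\ref{lem:distinct-images}. With it in hand there are no ``surviving error contributions'' and your inductive scheme is superfluous; without it, your induction is only sketched and it is unclear how applying ``a further $L$-operator'' interacts with the one already applied.

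Your final propagation step is also shaky. Having $L_{\cV_{f_n}}([f_n^{\cE_{f_n}}])\otimes\theta_0$ in $M$, you want $L_{\cV_{f_n}}([f_n^{\cE_{f_n}}])\otimes g\theta_0$ for $g\in\GL_d$. The identification $[f_n^{\cE_{f_n}}]\otimes g\theta_0=[f_n^{\cE_{f_n}}\circ g]\otimes\theta_0$ does not help directly, since $L_{\cV_{f_n}}$ is tailored to the ordered basis $\cB_{f_n}$, not to that of $f_n\circ g$, and Lemma~\ref{lem:descending-modules} requires an order-preserving identification of bases. The paper avoids this by using irreducibility \emph{before} applying $L$: for arbitrary $\theta$, write $\theta=\sum_\sigma a_\sigma\,\sigma\theta_{W_0}$, choose $\tau_\sigma\in\GL_n$ with $\tau_\sigma f_{W_0}=f_n\sigma$, and set $y=\sum_\sigma a_\sigma\tau_\sigma x\in M(\aF^n)$. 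Then $y=[f_n]\otimes\theta+\sum_{W\neq W_0}[f_W]\otimes\theta'_W$, and a single application of $L_{\cV_{f_n}}$ (together with Lemma~\ref{lem:distinct-images}) yields $L_{\cV_{f_n}}([f_n^{\cE_{f_n}}])\otimes\theta\in M$ directly.
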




We need a few lemmas.

\begin{lemma} 
	\label{lem:distinct-images}
	Let $f, g \colon \aF^d \to X$ be $\VI$-morphisms. Suppose $\theta \in \Theta$ is nonzero. Then $L_{\cV_f} ([g^{\cE_f}] \otimes \theta) = 0$ in $\cI(\Theta)$ if and only if $\im(g) \neq \im(f)$. In particular, $M_{n, \Theta} \neq 0$ if $n \ge d$.
\end{lemma}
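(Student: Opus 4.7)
The plan is to expand $L_{\cV_f}([g^{\cE_f}]\otimes \theta)$ as an explicit alternating sum and analyze when the resulting terms cancel. Expanding $L_{\cV_f}=\prod_{v\in \cV_f}(\id-\sigma_v)$ (with any chosen order, which is harmless by Lemma~\ref{lem:order-independence}) as a signed sum over subsets $S\subseteq \cV_f$ and applying Lemma~\ref{lem:inner-sum} to each product $\prod_{w\in S}\sigma_w$ gives
\[
L_{\cV_f}([g^{\cE_f}]) = \sum_{S\subseteq \cV_f}(-1)^{|S|}\Bigl[\,g^{\cE_f}+\textstyle\sum_{w\in S}\langle g,\alpha_w\rangle_w(e_w-\alpha_w)\,\Bigr].
\]
Write $\lambda_w \coloneq \langle g,\alpha_w\rangle_w$. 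Because $\{e_w-\alpha_w\}_{w\in \cV_f}$ is linearly independent in $X+\cE_f$ (immediate since $\{e_w\}$ is a basis of $\cE_f$), two subsets $S, S'$ yield the same perturbed injection $\aF^d\to X+\cE_f$ iff their symmetric difference is contained in $\cV_f^0 \coloneq \{w\in \cV_f : \lambda_w=0\}$. Grouping terms then introduces a factor of $\sum_{U\subseteq \cV_f^0}(-1)^{|U|}$, which is $0$ when $\cV_f^0\neq \emptyset$ and $1$ otherwise.

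The crux of the argument is the equivalence $\cV_f^0\neq \emptyset \iff \im(g)\neq \im(f)$. If $\im(g)=\im(f)$, then for any $w\in \cV_f$ we have $\alpha_w\in \im(f)=\im(g)$ while $\alpha_w\notin C_w+C'_w$ (since $C_w\oplus \aF\alpha_w=\im(f)$ and $C'_w\cap \im(f)=0$); hence $\im(g)\not\subseteq C_w+C'_w$ and $\lambda_w\neq 0$. For the converse, suppose $\im(g)\neq \im(f)$, and pick the $\prec$-smallest $\alpha_k\in \cB_f$ with $\alpha_k\notin \im(g)+\mathrm{span}((\cB_f)_{\prec \alpha_k})$; such $\alpha_k$ exists because otherwise an easy induction on $k$ forces $\im(f)\subseteq \im(g)$, contradicting the assumption. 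Extend $\im(g)+\mathrm{span}((\cB_f)_{\prec \alpha_k})$ to a hyperplane $H\subset X$ with $\alpha_k\notin H$, set $C_w\coloneq H\cap \im(f)$, and choose any complement $C'_w$ of $C_w$ inside $H$. A short dimension check shows $C_w$ is a complement of $\aF\alpha_k$ in $\im(f)$ containing $(\cB_f)_{\prec \alpha_k}$ and that $C'_w$ is a complement of $\im(f)$ in $X$, so $w=(\alpha_k, C_w, C'_w)\in \cV_f$; by construction $C_w+C'_w=H\supseteq \im(g)$, forcing $\lambda_w=0$.

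For non-vanishing when $\im(g)=\im(f)$, I use the canonical block decomposition $\cI(\Theta)(X+\cE_f)=\bigoplus_W \Theta_W$ indexed by $d$-dimensional subspaces $W\subseteq X+\cE_f$, where $\Theta_W\cong \Theta$ is the summand of $\bk[\Hom_{\VI}(\aF^d,X+\cE_f)]\otimes_{\GL_d}\Theta$ coming from embeddings with image $W$, and project the expansion above (tensored with $\theta$) onto the block $W=\im(g)$. The perturbed injection in the $T$-th term has image contained in $X$ iff its $\cE_f$-component is identically zero, which by independence of $\{e_w\}$ forces $\lambda_w=0$ for all $w\in T$; since $\cV_f^0=\emptyset$, only $T=\emptyset$ survives. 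Hence the $\im(g)$-projection is exactly $[g^{\cE_f}]\otimes \theta$, which is nonzero because $\theta\neq 0$. The ``in particular'' statement then follows by taking $f=g=f_n$ for $n\ge d$.

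The principal obstacle is the combinatorial step in the second paragraph: the constraint $(\cB_f)_{\prec \alpha_w}\subseteq C_w$ couples the choice of $\alpha_w$ to earlier basis elements, and the ordering $\prec$ on $\cB_f$ is used essentially to find a suitable $\alpha_k$. Once the witnessing hyperplane $H$ is produced, everything else is bookkeeping with the expansion formula.
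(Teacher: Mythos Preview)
Your proof is correct and follows essentially the same approach as the paper's: both hinge on the expansion from Lemma~\ref{lem:inner-sum}, on locating a triple $w\in\cV_f$ with $\im(g)\subset C_w+C'_w$ (your $\cV_f^0\neq\emptyset$), and on the block decomposition of $\cI(\Theta)(X+\cE_f)$ by image for the non-vanishing direction. The differences are presentational---the paper kills the expression directly with a single factor $(\id-\sigma_v)$ rather than grouping the full expansion, and it argues that all $g_S$ have pairwise distinct images rather than projecting onto one block---and your construction of the witness $w$ via the $\prec$-smallest $\alpha_k\notin\im(g)+\mathrm{span}((\cB_f)_{\prec\alpha_k})$ actually fills in a detail the paper leaves implicit.
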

\begin{proof}	
	First suppose $W' \coloneq \im(g) \neq W \coloneq \im(f)$. Since $W' \neq W$ and $\dim_{\aF}{W} = \dim_{\aF}{W'}$, there exists a $v \in \cV_f$ such that $\alpha_v \notin W'$ and $W' \subset C_v + C'_v$. It suffices to show that $(\id - \sigma_v)$ kills the element $(\prod_{w \in S} \sigma_w)[g^{\cE_f}]$ for any finite subset $S$ of the initial segment $(\cV_f)_{<v}$. By Lemma~\ref{lem:inner-sum}, we have $(\prod_{w \in S} \sigma_w) g^{\cE_f} =  g^{\cE_f} + \sum_{w \in S} \langle g, \alpha_w \rangle_{w}(e_w - \alpha_w ) $. By definition, $\sigma_v$ fixes each $(e_w - \alpha_w )$. Moreover, since the image of $g^{\cE_f}$ is contained in $C_v \oplus C'_v \subset V \oplus \cE_f$, we see that $\sigma_v$ fixes $g^{\cE_f}$ as well. Thus $\sigma_v$ fixes $(\prod_{w \in S} \sigma_w)g^{\cE_f}$, as desired.
	
	Next suppose $\im(g) = \im(f) = W$. For a subset $S$ of $\cV_f$, set $g_S \coloneq  g^{\cE_f} + \sum_{w \in S}\langle g, \alpha_{w} \rangle_{w}(e_{w} - \alpha_{w})$. By Lemma~\ref{lem:inner-sum}, it suffices to show that if $S \neq S'$ then $g_S$ and $ g_{S'}$ have distinct images. To see this, let $v \in S \setminus S'$ and let $C$ be the direct sum of $X$ and $\sum_{w \neq v} \aF e_w$. Then the image of $g_{S'}$ lie in $C$ but that of $g_S$ doesn't lie in $C$ (the functional $\langle g, \alpha_{v} \rangle_{v}$ is nonzero because $\im(g) = \im(f)$). This completes the proof.
\end{proof}

\begin{proof}[Proof of Theorem~\ref{thm:cofinal}]
	Since $M \neq 0$ there exists an $n$ such that $M(\aF^n) \neq 0$, and so it suffices to prove the second assertion. By Proposition~\ref{prop:descending}, it suffices to show that $M_{n, \Theta} \subset M$. In other words, it suffices to show that if $\theta \in \Theta$ is arbitrary, then $L_{\cV_{f_n}}([f_n^{\cE_{f_n}}] \otimes \theta) \in M$.
	
	 Let $x \in M(\aF^n)$ be a nonzero element. For every subspace $W$ of $\aF^n$, choose a $\VI$-morphism $f_W \colon \aF^d \to \aF^n$. We assume that the natural inclusion $f_n \colon \aF^d \to \aF^n$ is among these choices of morphisms. Now every element of $\cI(\Theta)(\aF^n)$ can be written uniquely as $\sum_{W} [f_W] \otimes \theta_W$ where $\theta_W \in \Theta$. In particular, we have $x = \sum_{W} [f_W] \otimes \theta_W$. Since $x \neq 0$ there exists a $W_0$ such that $\theta_{W_0} \neq 0$. 
	
	Now let $\theta$ be an arbitrary element of $\Theta$. Since $\Theta$ is irreducible, we can write $\theta$ as \[\theta  = \sum_{\sigma \in \GL_d} a_{\sigma} \sigma \theta_{W_0}, \] where $a_{\sigma} \in \bk$. For $\sigma \in \GL_d$, let  $\tau_{\sigma} \in \GL_n$ be an automorphism such that $\tau_{\sigma} \circ f_{W_0} = f_n \circ \sigma$. Such an automorphism exists by transitivity of action of $\GL_n$ on $d$-dimensional subspaces of $\aF^n$. It is easy to see that $\im( \tau_{\sigma} f_W) = \im(f_n)$ if and only if $W =  W_0$.  Set $y = \sum_{\sigma \in \GL_d} a_{\sigma} \tau_{\sigma} x$. Clearly, $y \in M(\aF^n)$. We have  \[  \sum_{\sigma \in \GL_d} a_{\sigma} \tau_{\sigma} [f_{W_0}] \otimes \theta_{W_0} = \sum_{\sigma \in \GL_d} a_{\sigma} [f_n \sigma] \otimes \theta_{W_0} = [f_n]  \otimes  \left( \sum_{\sigma \in \GL_d} a_{\sigma} \sigma \theta_{W_0} \right) = [f_n] \otimes \theta.  \]  This shows that $y$ can be written as \[y =  [f_n] \otimes \theta +  \sum_{W \neq \im(f_n) } [f_W] \otimes \theta'_W \]  for some $\theta'_W \in \Theta$. Now let $\ell \colon \aF^n \to \aF^n + \cE_{f_n}$ be the natural inclusion. Since $y \in M(\aF^n)$, we conclude that  $L_{\cV_{f_n}}(\ell_{\star}(y)) \in M(\aF^n + \cE_{f_n})$. We have \[\ell_{\star}(y) = [f_n^{\cE_{f_n}}] \otimes \theta +  \sum_{W \neq \im(f_n) } [f_W^{\cE_{f_n}}] \otimes \theta'_W. \] By Lemma~\ref{lem:distinct-images}, we conclude that \[L_{\cV_{f_n}}(\ell_{\star}(y)) = L_{\cV_{f_n}}([f_n^{\cE_{f_n}}] \otimes \theta) +  \sum_{W \neq \im(f_n) } L_{\cV_{f_n}}([f_W^{\cE_{f_n}}] \otimes \theta'_W) = L_{\cV_{f_n}}([f_n^{\cE_{f_n}}] \otimes \theta) \in M(\aF^n + \cE_{f_n}).  \] This proves that $M_{n, \Theta} \subset M$, finishing the proof.
\end{proof}





\section{Stabilization of the cofinal sequence, modulo torsion, in non-describing characteristic}
\label{sec:stabilization}

In this subsection, we assume that $q= \abs{\aF}$ is invertible in $\bk$. 

Let $f \colon \aF^d \to X$ be a morphism in $\VI$. Let $W$ be the image of $f$ with ordered basis $(\cB_f, \prec)$, and $g \colon \aF^d \to W$ be the restriction of $f$ to $W$. Let $<_{\alpha}$ be a linear order on  complements of the line $\aF \alpha$ in $W$ that contain $(\cB_f)_{\prec \alpha} $, and $<'$ be a linear order on complements of $W$ in $V$. Let $<$ be the lexicographic order on $\cV_f$, that is, to check $v < w$ we first compare $\alpha$ components and then $C$ components and at last the $C'$ components. Order $\cV_g$ using the same lexicographic order as above. There is a natural order preserving (non-strictly) projection map $\pi \colon \cV_f \to \cV_g$ and, for $x \in \cV_g$, we denote the fiber of this map at $x$ by $\V{x}$.More explicitly $\pi$ is given by $(\alpha, C, C') \mapsto (\alpha, C, 0)$.

Let  $\hat{ } \colon X + \cE_f \to X + \cE_f$ be the $\VI$ automorphism that takes $e_v$ to $\he_v \coloneq e_v - \alpha_v$ for each $v$ and fixes $V$ pointwise. Let $\cU_{x}$ be the unipotent subgroup (with respect to the order defined by $<$) of the automorphism group of $\hat{\cE}_{x} \coloneq \aF[\{\he_v \colon v \in \V{x} \}]$, that is, $\sigma \in \cU_{x}$ if and only if for each $v\in \V{x}$ the element $\sigma \he_v -  \he_v$ is a linear combination of $\he_w$ with $v < w \in \V{x}$. In particular, $\cU_{x}$ fixes $\he_{\max(\V{x})}$. Clearly, the size of $\cU_{x}$ is a power of $q$. From now on, we regard $\cU_x$ as a subgroup of $\GL(X +  \cE_f)$ that fixes $X$ and $\he_w$ pointwise for each $w \notin \V{x}$.


\begin{lemma} 
	\label{lem:direct-product-cU}
	If $x \neq y$ then $\cU_{x} \cap \cU_{y}$ and $[\cU_{x}, \cU_{y}]$ are trivial. In particular, $\cU_{x} \cU_{y} = \cU_{x} \times \cU_{y}$.
\end{lemma}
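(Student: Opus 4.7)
The plan is to exploit the fact that for distinct $x, y \in \cV_g$ the fibers $\V{x}$ and $\V{y}$ are disjoint, so the groups $\cU_x$ and $\cU_y$ act on disjoint pieces of the basis $\{\he_w : w \in \cV_f\}$ of $\cE_f$ while both fixing $X$ pointwise. Concretely, $\cU_x$ moves the $\he_v$ with $v \in \V{x}$ among themselves (upper unitriangularly with respect to $<$) and fixes $\he_w$ for $w \notin \V{x}$; because $\V{y} \subset \cV_f \setminus \V{x}$, every element of $\cU_x$ fixes $\hat\cE_y$ pointwise, and symmetrically every element of $\cU_y$ fixes $\hat\cE_x$ pointwise.

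For the first triviality, I would take $\sigma \in \cU_x \cap \cU_y$. Membership in $\cU_x$ forces $\sigma$ to fix $\he_w$ for all $w \notin \V{x}$, while membership in $\cU_y$ forces $\sigma$ to fix $\he_w$ for all $w \notin \V{y}$; since $\V{x}$ and $\V{y}$ are disjoint, the two complements cover all of $\cV_f$, so $\sigma$ fixes every $\he_w$. As $\sigma$ also fixes $X$ pointwise and the $\he_w$ together with $X$ span $X + \cE_f$, we conclude $\sigma = \id$.

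For the commutator, fix $\sigma \in \cU_x$ and $\tau \in \cU_y$; I would check $\sigma\tau = \tau\sigma$ on a spanning set of $X + \cE_f$. On $X$ both act trivially. For $w \in \V{x}$: $\tau$ fixes $\he_w$ and $\sigma\he_w \in \hat\cE_x$, so $\sigma\tau\he_w = \sigma\he_w$ and, since $\tau$ fixes $\hat\cE_x$ pointwise, also $\tau\sigma\he_w = \sigma\he_w$. The case $w \in \V{y}$ is symmetric, and for $w$ in a fiber other than $\V{x}$ or $\V{y}$ both $\sigma$ and $\tau$ fix $\he_w$. Hence $[\sigma,\tau] = \id$.

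With elementwise commutativity and trivial intersection in hand, $\cU_x\cU_y$ is a subgroup and the natural map $\cU_x \times \cU_y \to \cU_x\cU_y$ is a bijective homomorphism, giving $\cU_x\cU_y = \cU_x \times \cU_y$. I do not expect any real obstacle here; the only point needing a bit of care is to record explicitly, before the computations, the observation that $\cU_x$ preserves $\hat\cE_x$ and fixes $\hat\cE_y$ pointwise when $x \neq y$, since that is what makes both assertions collapse to the disjointness $\V{x} \cap \V{y} = \emptyset$.
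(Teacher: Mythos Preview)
Your proof is correct and is exactly the unpacking of what the paper intends: the paper's own proof is simply ``This is immediate,'' and your argument spells out precisely the disjoint-fiber reasoning that makes it so. There is nothing to add or correct.
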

\begin{proof} This is immediate.
\end{proof}

For an initial segment $I$ of $\cV_g$ we define $\cU_{I}$ to be the subgroup $\prod_{x \in I} \cU_{x}$. For $x \in \cV_g$, let $D_x$ be the singleton consisting of $\max(\V{x})$ (where the $\max$ is taken with respect to $<$ on $\cV_f$), and let $D_I$ be the union of $D_x$ for $x \in I$. 

\begin{lemma}
	\label{lem:unipotent-base} 
	Let $x \in \cV_g$ and let $X$ be a subset of $\cV_f$ contained in $\sqcup_{y < x} \V{y}$. Then we have \[\frac{1}{\abs{\cU_{x}}}\sum_{\sigma \in \cU_{x}} \sigma L_{\V{x}}L_X([f^{\cE_f}])=   L_{D_{x}}L_X([f^{\cE_f}]).\]
\end{lemma}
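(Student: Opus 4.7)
The plan is to factor $L_{\V{x}} = L_{D_x} \cdot L_{\V{x} \setminus D_x}$, where $D_x = \{v_0\}$ with $v_0 = \max \V{x}$, pull $L_{D_x} = \id - \sigma_{v_0}$ out of the $\cU_x$-average using commutativity, and then show that when $\id - \sigma_{v_0}$ is subsequently applied, the inner average collapses to $L_X [f^{\cE_f}]$.

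First I would verify that every $\sigma \in \cU_x$ commutes with $\sigma_{v_0}$. Since $v_0 = \max \V{x}$ and $\cU_x$ is upper unitriangular with respect to $<$, we have $\sigma \he_{v_0} = \he_{v_0}$, and combining this with $\sigma$ fixing $X$ pointwise and $\sigma_{v_0}$ fixing each $\he_w$ yields $\sigma \sigma_{v_0} = \sigma_{v_0} \sigma$ by a direct check on a basis of $X + \cE_f$. This gives
\[ \frac{1}{\abs{\cU_x}} \sum_{\sigma \in \cU_x} \sigma L_{\V{x}} L_X [f^{\cE_f}] = (\id - \sigma_{v_0}) \cdot \frac{1}{\abs{\cU_x}} \sum_{\sigma \in \cU_x} \sigma L_{\V{x} \setminus D_x} L_X [f^{\cE_f}]. \]

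To unpack the inner average I would expand using Lemma~\ref{lem:inner-sum}. Every element of $\V{x}$ dominates every element of $X \subseteq \sqcup_{y<x} \V{y}$ in the order $<$, so the descending products for $S \subseteq \V{x} \setminus D_x$ and $T \subseteq X$ compose into the descending product for $S \cup T$, and Lemma~\ref{lem:inner-sum} produces the morphism
\[ y \; \longmapsto \; f(y) + \ell(y) \he_S + \eta_T(y), \]
where $\ell(y) \coloneq \langle f(y), \alpha_v \rangle_v$ is independent of $v \in \V{x}$ (because $\alpha_v = \alpha_x$ and $C_v = C_x$ are shared across the fiber), $\he_S \coloneq \sum_{v \in S} \he_v$, and $\eta_T(y) \coloneq \sum_{w \in T} \langle f(y), \alpha_w \rangle_w \he_w$ lies in a subspace fixed pointwise by $\cU_x$. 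Acting by $\sigma \in \cU_x$ therefore only replaces $\he_S$ by $M(\sigma) \he_S$, where $M(\sigma) \in \GL(\hat{\cE}_x)$ is the matrix of $\sigma$ on $\hat{\cE}_x$.

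The heart of the proof --- and the step I expect to be the main obstacle --- is showing that for each nonempty $S \subseteq \V{x} \setminus D_x$ and each $T \subseteq X$, the sum $\sum_{\sigma \in \cU_x} [g_{S,T,\sigma}]$ is fixed by $\sigma_{v_0}$, where $g_{S,T,\sigma}$ denotes the morphism displayed above. A direct computation shows that $\sigma_{v_0}$ shifts $M(\sigma) \he_S$ by $\he_{v_0}$, and since $M(\sigma)$ fixes $\he_{v_0}$ one gets $M(\sigma) \he_S + \he_{v_0} = M(\sigma \tau) \he_S$ for any fixed $\tau \in \cU_x$ with $M(\tau) \he_S = \he_S + \he_{v_0}$. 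Such a $\tau$ exists because $v_0 \notin S$ and $S$ is nonempty --- take the elementary transvection sending $\he_{\min S}$ to $\he_{\min S} + \he_{v_0}$ and fixing the remaining basis vectors. Right multiplication by this single $\tau$ is a bijection of $\cU_x$, so $\sigma_{v_0}$ merely permutes the summands and the sum is $\sigma_{v_0}$-invariant. Meanwhile the $S = \emptyset$ contributions, for which $g_{\emptyset,T,\sigma}$ does not depend on $\sigma$, reassemble into exactly $L_X [f^{\cE_f}]$, and applying $\id - \sigma_{v_0}$ annihilates the $S \neq \emptyset$ contributions, yielding $L_{D_x} L_X [f^{\cE_f}]$ as required.
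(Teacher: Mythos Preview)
Your argument is correct and rests on the same transvection idea as the paper's proof, but you package it a bit differently. The paper expands $\sum_{\sigma}\sigma L_{\V{x}}([g])$ as $\sum_{S'\subset \V{x}}(-1)^{|S'|}\Phi(S')$ and then pairs each nonempty $S'\not\ni v_0$ with $S'' = S'\cup\{v_0\}$, constructing a (\emph{$\sigma$-dependent}) bijection $\sigma\leftrightarrow\tau$ in $\cU_x$ to show $\Phi(S')=\Phi(S'')$; the surviving terms $\Phi(\emptyset)-\Phi(D_x)$ give $|\cU_x|\,L_{D_x}([g])$. You instead observe at the outset that $\cU_x$ commutes with $\sigma_{v_0}$, factor $L_{D_x}$ to the left, and then show $\sigma_{v_0}$-invariance of the nonempty-$S$ contributions via right multiplication by a \emph{single fixed} elementary transvection $\tau$; this is slightly cleaner, since the bijection on $\cU_x$ is uniform in $\sigma$. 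One wording quibble: when you say ``$\sigma_{v_0}$ shifts $M(\sigma)\he_S$ by $\he_{v_0}$,'' note that $\sigma_{v_0}$ actually fixes every $\he_w$; the extra $\ell(y)\he_{v_0}$ arises from $\sigma_{v_0}$ acting on $f(y)$, and it then combines with the $\ell(y)M(\sigma)\he_S$ term in the formula --- the conclusion is unchanged, but the sentence as written could mislead a reader.
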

\begin{proof}
	Let $g_{X'} = f^{\cE_f} + \sum_{v \in X' \subset X} \langle f, \alpha_{v}  \rangle_v \he_v$. By Lemma~\ref{lem:inner-sum},  $L_X([f^{\cE_f}])$ is a linear combination of elements of the form $[g_{X'}]$. So let $g = g_{X'}$ for some subset $X'$ of $X$. By linearity, it suffices to show that $\frac{1}{\abs{\cU_{x}}}\sum_{\sigma \in \cU_{x}} \sigma L_{\V{x}}([g])=   L_{D_{x}}([g])$. Note that $\sigma g = g$ for each $\sigma \in \cU_x$. Thus by Lemma~\ref{lem:inner-sum}, we have
	\begin{align*}
	\sum_{\sigma \in \cU_{x}} \sigma L_{\V{x}}([g]) &= \sum_{\sigma \in \cU_{x}} \sigma \left(\sum_{S' \subset \V{x}} (-1)^{\abs{S'}}[g + \sum_{v \in S'} \langle f, \alpha_{v}  \rangle_v \he_v ]  \right)  \\
	&= \sum_{\sigma \in \cU_{x}}  \left(\sum_{S' \subset \V{x}} (-1)^{\abs{S'}}[g + \sum_{v \in S'} \langle f, x_{v}  \rangle_v (\sigma \he_v) ]  \right) \\
	&=  \sum_{S' \subset \V{x}} (-1)^{\abs{S'}} \left(\sum_{\sigma \in \cU_{x}} [g + \sum_{v \in S'} \langle f, \alpha_{v}  \rangle_v (\sigma \he_v) ]  \right) \\
	&= \sum_{S' \subset \V{x}} (-1)^{\abs{S'}} \Phi(S')
	\end{align*} where $\Phi(S')$ is the expression in big round brackets in the fourth expression. Suppose $S' \subset \V{x}$ is nonempty and does not contain $D_{x}$, and let $S'' = S' \sqcup D_{x}$. We claim that $\Phi(S') =  \Phi(S'')$. To see this, first note that the functional $\langle f, \alpha_{v}  \rangle_v$ does not depend on $v \in \V{x}$ because $(\alpha_v, C_v) = (\alpha_w, C_w)$ for $v,w \in \V{x}$.  Now for each $\sigma \in \cU_{x}$ given by $\sigma(\he_v) = e_v + \sum_{w > v} a_{v,w} \he_w$, define $\tau \in \cU_x$ by $\tau(\he_v) = e_v + \sum_{w > v} b_{v,w} \he_w$ where \[ b_{v,w} = \begin{cases}
	a_{v,w} - 1 & \mbox{if $w \in D_x$ and $v = \min{S'}$,}  \\
	a_{v, w} & \mbox{otherwise.}
	\end{cases}  \]  By construction, we have \[ \sum_{v \in S'} \sigma \he_v = \sum_{v \in S''}  \tau \he_v.  \] Since the functional $\langle f, \alpha_{v}  \rangle_v$ does not depend on $v \in \V{x}$, we see that \[ [g + \sum_{v \in S'} \langle f, \alpha_{v}  \rangle_v (\sigma \he_v) ] = [g + \langle f, \alpha_{v}  \rangle_v \sum_{v \in S'}  (\sigma \he_v) ] = [g + \langle f, \alpha_{v}  \rangle_v \sum_{v \in S''}  (\tau \he_v) ] =   [g + \sum_{v \in S''} \langle f, \alpha_{v}  \rangle_v (\tau \he_v) ].    \] It follows immediately that $\Phi(S') =  \Phi(S'')$, establishing the claim. The claim implies that we have \[\sum_{\sigma \in \cU_{x}} \sigma L_{\V{x}}([g])  = \Phi(\emptyset) - \Phi(D_{x}). \] Moreover, we have $\Phi(\emptyset) = \abs{\cU_{x}} [g]$ and $\Phi(D_{x}) = \abs{\cU_{x}} [g + \langle f, \alpha_v \rangle_v (e_v  - \alpha_v)] $ where $v = \max(\V{x})$. This shows that \[\sum_{\sigma \in \cU_{x}} \sigma L_{\V{x}}([g]) = \abs{\cU_{x}} L_{D_{x}}([g]),\] completing the proof.
\end{proof}

\begin{lemma} 
	\label{lem:commutativity}
	Let $\sigma' \in \prod_{y <  x}\cU_{y}$. If $v \in \V{x}$ then $\sigma'$ commutes with $\sigma_v$. 
\end{lemma}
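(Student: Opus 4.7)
The plan is to check commutativity on a convenient spanning set of $X + \cE_f$, namely $(C_v + C'_v) \cup \{\alpha_v\} \cup \{\he_w : w \in \cV_f\}$. First I would unpack how each operator acts on this spanning set. From the definition of $\sigma_v$ given earlier, it sends $\alpha_v \mapsto \alpha_v + \he_v$, fixes $C_v$ and $C'_v$, and fixes every $\he_w$ (since $\sigma_v$ fixes each $\alpha_w - e_w = -\he_w$); in particular, its only nontrivial action on the spanning set is on $\alpha_v$. Meanwhile, any $\sigma' \in \prod_{y<x}\cU_y$ fixes $X$ pointwise, fixes every $\he_w$ with $w \notin \sqcup_{y<x}\V{y}$, and on each $\hat{\cE}_y$ acts by an upper-triangular automorphism with respect to the order $<$.

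The key observation I would then invoke is that, since $\pi$ is a well-defined map, its fibers $\V{y}$ are pairwise disjoint; hence $v \in \V{x}$ forces $v \notin \V{y}$ for any $y < x$, and so $\sigma'$ fixes $\he_v$. With this, the verification on each part of the spanning set is immediate. On $C_v + C'_v \subset X$ both maps act trivially, so they commute there. On $\alpha_v$, $\sigma'$ fixes both $\alpha_v \in X$ and $\he_v$, hence $\sigma'\sigma_v(\alpha_v) = \alpha_v + \he_v = \sigma_v\sigma'(\alpha_v)$. On any $\he_w$, $\sigma_v$ fixes every $\he_{w'}$ and therefore fixes the element $\sigma'(\he_w)$, which lies in the $\he$-span, giving $\sigma_v\sigma'(\he_w) = \sigma'(\he_w) = \sigma'\sigma_v(\he_w)$.

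I do not anticipate a real obstacle here: the argument reduces to the fact that the support of $\sigma_v$ (the line $\aF\alpha_v$) and the support of $\sigma'$ (the span of the $\he_w$ for $w \in \sqcup_{y<x}\V{y}$) sit in essentially complementary pieces of $X + \cE_f$, and the one point of potential overlap, the vector $\he_v$, is fixed by $\sigma'$ precisely because the fibers of $\pi$ are disjoint. The only subtlety worth flagging is making sure $\sigma'$ preserves the full $\he$-span on which $\sigma_v$ acts trivially, so that the two actions can be analyzed independently; this is built into the definition of $\cU_y$ as a unipotent subgroup of $\GL(\hat{\cE}_y)$ extended by the identity on the rest of $X + \cE_f$.
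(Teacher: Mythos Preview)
Your proof is correct and is essentially the same argument as the paper's, just carried out element-by-element on a spanning set rather than phrased via a single direct-sum decomposition. The paper writes $X + \cE_f = \big(\sum_{y<x}\hat{\cE}_y\big) \oplus \big(X + \sum_{y\ge x}\hat{\cE}_y\big)$ and observes that $\sigma'$ fixes the second summand pointwise while stabilizing the first, whereas $\sigma_v$ fixes the first summand pointwise while stabilizing the second; your final paragraph identifies exactly this structure.
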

\begin{proof} We have $V + \cE_f = (\sum_{y < x} \hat{\cE}_{y}) + V + (\sum_{x \le y } \hat{\cE}_{y})$. Clearly, $\sigma'$ fixes $V + (\sum_{x \le y } \hat{\cE}_{y})$ pointwise and stabilizes $\sum_{y < x} \hat{\cE}_{y}$. On the other hand, $\sigma_v$ fixes $\sum_{y < x} \hat{\cE}_{y}$ pointwise and stabilizes $V + (\sum_{x \le y } \hat{\cE}_{y})$. The assertion follows from this.
\end{proof}

\begin{lemma}
	\label{lem:unipotent-general} 
	We have $\frac{1}{\abs{\cU_{\cV_g}}}\sum_{\sigma \in \cU_{\cV_g}} \sigma L_{\cV_f}([f^{\cE_f}])=   L_{D_{\cV_g}}([f^{\cE_f}])$.
\end{lemma}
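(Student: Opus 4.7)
The plan is to reduce Lemma~\ref{lem:unipotent-general} to an iterated application of Lemma~\ref{lem:unipotent-base}, one element of $\cV_g$ at a time, starting from the top. Fix a linear ordering $x_1 < x_2 < \cdots < x_k$ of $\cV_g$. By Lemma~\ref{lem:direct-product-cU}, the full unipotent group factors as a direct product $\cU_{\cV_g} = \cU_{x_1} \times \cdots \times \cU_{x_k}$, so every $\sigma \in \cU_{\cV_g}$ is written uniquely as $\sigma_1 \cdots \sigma_k$ with $\sigma_j \in \cU_{x_j}$. Similarly, using the descending-product property and the fact that $\V{x_1} \sqcup \cdots \sqcup \V{x_j}$ is an initial segment of $\cV_f$ for each $j$, we get $L_{\cV_f} = L_{\V{x_k}} \cdots L_{\V{x_1}}$. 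Hence the left hand side of the claim becomes an iterated average
\[
\frac{1}{|\cU_{x_1}|}\sum_{\sigma_1 \in \cU_{x_1}} \cdots \frac{1}{|\cU_{x_k}|}\sum_{\sigma_k \in \cU_{x_k}} \sigma_1 \cdots \sigma_k L_{\V{x_k}} \cdots L_{\V{x_1}}([f^{\cE_f}]).
\]

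I would carry out the induction starting from the innermost average. For the base step, the $\sigma_j$'s with $j < k$ can be pulled outside the $\cU_{x_k}$-sum, and what remains is exactly Lemma~\ref{lem:unipotent-base} applied with $x = x_k$ and $X = \V{x_{k-1}} \sqcup \cdots \sqcup \V{x_1} \subset \bigsqcup_{y < x_k} \V{y}$. This replaces $L_{\V{x_k}}$ by $L_{D_{x_k}}$. For the inductive step, suppose we have already reduced the expression to
\[
\frac{1}{|\cU_{x_1} \times \cdots \times \cU_{x_j}|} \sum \sigma_1 \cdots \sigma_j L_{D_{x_k}} \cdots L_{D_{x_{j+1}}} L_{\V{x_j}} \cdots L_{\V{x_1}}([f^{\cE_f}]).
\]
To average over $\cU_{x_j}$, I need to move $\sigma_j$ past each $L_{D_{x_l}}$ with $l > j$. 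Since $D_{x_l} \subset \V{x_l}$ and $\sigma_j \in \cU_{x_j} \subset \prod_{y < x_l} \cU_y$, Lemma~\ref{lem:commutativity} says $\sigma_j$ commutes with every $\sigma_v$ for $v \in D_{x_l}$, hence with the product $L_{D_{x_l}}$. After commuting $\sigma_j$ to the right of the $L_{D_{x_l}}$'s, I apply Lemma~\ref{lem:unipotent-base} again with $x = x_j$ and $X = \V{x_{j-1}} \sqcup \cdots \sqcup \V{x_1}$ to replace $L_{\V{x_j}}$ by $L_{D_{x_j}}$.

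After $k$ rounds of this process, no averaging operators remain and the expression equals $L_{D_{x_k}} L_{D_{x_{k-1}}} \cdots L_{D_{x_1}}([f^{\cE_f}])$. Since each $D_{x_1} \sqcup \cdots \sqcup D_{x_j}$ is an initial segment of $D_{\cV_g}$ in the ordering inherited from $\cV_f$, the descending-product compatibility of $L$ identifies this with $L_{D_{\cV_g}}([f^{\cE_f}])$, completing the proof.

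There is no real obstacle here: the lemma is essentially a bookkeeping-heavy iteration of Lemma~\ref{lem:unipotent-base}, with the two commutativity facts (Lemma~\ref{lem:direct-product-cU} and Lemma~\ref{lem:commutativity}) playing the purely formal role of allowing us to decompose the group sum and to slide the already-reduced $L_{D_{x_l}}$ factors past $\sigma_j$. The only point that requires mild care is confirming that the order structure on $D_{\cV_g}$ is compatible with the descending product so that the final product telescopes into $L_{D_{\cV_g}}$ rather than some rearrangement of it.
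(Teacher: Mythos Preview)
Your proposal is correct and follows essentially the same approach as the paper: both arguments iterate Lemma~\ref{lem:unipotent-base} over the elements of $\cV_g$, using Lemma~\ref{lem:direct-product-cU} to factor the group sum and Lemma~\ref{lem:commutativity} to rearrange the unipotent elements against the $L$-operators. The only cosmetic difference is direction: the paper inducts from the bottom up on initial segments $I \subset \cV_g$ (commuting $\sigma' \in \cU_X$ past a single $L_{\V{x}}$ and then applying the inductive hypothesis inside), whereas you induct from the top down (commuting $\sigma_j$ past the already-reduced $L_{D_{x_l}}$'s before applying Lemma~\ref{lem:unipotent-base}); both routes invoke exactly the same three lemmas and arrive at $L_{D_{x_k}} \cdots L_{D_{x_1}}([f^{\cE_f}]) = L_{D_{\cV_g}}([f^{\cE_f}])$.
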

\begin{proof} Let $I$ be an initial segment of $\cV_g$.  We prove by induction on the size of $I$ that \[\sum_{\sigma \in \cU_I} \sigma L_{\V{I}}([f^{\cE_f}]) = \abs{\cU_I} L_{D_I}([f^{\cE_f}]).\] The $\abs{I} = 1$ case follows from Lemma~\ref{lem:unipotent-base}. Next, suppose $I = X \sqcup \{x\}$ for some initial segment $X$ of $\cB_f$. Then we have \begin{align*}
	\sum_{\sigma \in \cU_I} \sigma L_{\V{I}}([f^{\cE_f}]) &= \sum_{(\sigma, \sigma') \in \cU_{x} \times \cU_X} (\sigma, \sigma')  L_{\V{x}}L_{\V{X}}([f^{\cE_f}])\\
	&= \left(\sum_{\sigma \in \cU_{x}} \sigma L_{\V{x}}\right) \left(\sum_{\sigma' \in \cU_X} 
	\sigma' L_{\V{X}} \right) ([f^{\cE_f}]) &\text{by Lemma }\ref{lem:commutativity}\\
	&= \abs{\cU_X} \left(\sum_{\sigma \in \cU_{x}} \sigma L_{\V{x}}\right) L_{D_X} ([f^{\cE_f}])  &\text{by induction }\\
	&= \abs{\cU_X} \abs{\cU_{x} } L_{D_{x}} L_{D_X} ([f^{\cE_f}]) &\text{by Lemma }\ref{lem:unipotent-base}\\
	&= \abs{\cU_I} L_{D_I} ([f^{\cE_f}]) &\text{by Lemma }~\ref{lem:direct-product-cU},
	\end{align*} completing the proof.
\end{proof} 

We now recall and prove the following theorem from the introduction (copy of Theorem~\ref{intro:thm:collapse}).

\begin{theorem}
	\label{thm:collapse}
Let $\Theta$ be any representation of $\GL_d$. Then the descending chain $M_{d, \Theta} \supset M_{d+1, \Theta} \supset \ldots$ stabilizes in $\ModVI^{\gen}$. In fact, $M_{d,  \Theta}/M_{n,  \Theta}$ is supported in degrees $< n + q^{d(n-d)}(1 + q + \cdots + q^{d-1})$. In other words,  $M_{n, \Theta} = M_{d,  \Theta}$ in $\ModVI^{\gen}$ for each $n \ge d$. 
\end{theorem}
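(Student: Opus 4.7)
My plan combines the averaging identity Lemma~\ref{lem:unipotent-general} with the intertwining argument from the proof of Lemma~\ref{lem:descending-modules}. Non-describing characteristic is used only through the averaging, which lets us trade the long product $L_{\cV_{f_n}}$ for the short product $L_{D_{\cV_{f_d}}}$; the latter will turn out to be exactly a pushforward of the generator of $M_{d,\Theta}$, which is the bridge between the two modules.

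First I would apply Lemma~\ref{lem:descending-modules} with $f = f_d$, $g = f_n$, and $h \colon \aF^d \hookrightarrow \aF^n$ the natural inclusion, choosing the complement $K$ of $\aF^d$ in $\aF^n$ to be the maximum complement $C'_{\max}$ under the chosen order $<'$. Because the set of complements of $\aF^d$ in $\aF^n$ does not depend on $(\alpha, C)$, the induced bijection $\phi(\alpha, C, 0) = (\alpha, C, C'_{\max})$ has image equal to $D_{\cV_{f_d}}$ as a subset of $\cV_{f_n}$. The intertwining computation inside the proof of Lemma~\ref{lem:descending-modules}, combined with the order-invariance of Lemma~\ref{lem:order-independence}, then yields
\[
\wt{h}_{\star}\bigl(L_{\cV_{f_d}}([f_d^{\cE_{f_d}}] \otimes \theta)\bigr) = L_{D_{\cV_{f_d}}}\bigl([f_n^{\cE_{f_n}}] \otimes \theta\bigr),
\]
where $\wt{h} \colon \aF^d + \cE_{f_d} \hookrightarrow \aF^n + \cE_{f_n}$ is the extension of $h$ described in Lemma~\ref{lem:descending-modules}. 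Now Lemma~\ref{lem:unipotent-general} applied to the pair $(f_n, f_d)$ rewrites the right-hand side as the unipotent average $\tfrac{1}{\abs{\cU_{\cV_{f_d}}}} \sum_{\sigma \in \cU_{\cV_{f_d}}} \sigma L_{\cV_{f_n}}([f_n^{\cE_{f_n}}] \otimes \theta)$, a $\bk$-linear combination of automorphism translates of a generator of $M_{n,\Theta}$; hence both sides lie in $M_{n,\Theta}(\aF^n + \cE_{f_n})$.

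To finish, I would run a factoring argument to propagate this inclusion upward in degree. For any $N \geq n + \abs{\cV_{f_n}} = n + q^{d(n-d)}(1 + q + \cdots + q^{d-1})$, any $\VI$-morphism $h' \colon \aF^{d + \abs{\cV_{f_d}}} \to \aF^N$ factors as $h'' \circ \wt{h}$ for some injection $h'' \colon \aF^n + \cE_{f_n} \hookrightarrow \aF^N$ by elementary basis extension. Since $M_{n,\Theta}$ is a $\VI$-submodule, $h''_{\star}$ preserves it, so $h'_{\star}(L_{\cV_{f_d}}([f_d^{\cE_{f_d}}] \otimes \theta)) \in M_{n,\Theta}(\aF^N)$. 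As such pushforwards $\bk$-span $M_{d,\Theta}(\aF^N)$, this gives $M_{d,\Theta}(\aF^N) \subset M_{n,\Theta}(\aF^N)$ in the claimed range, i.e., $M_{d,\Theta}/M_{n,\Theta}$ is supported in degrees $< n + q^{d(n-d)}(1 + q + \cdots + q^{d-1})$. Combined with Proposition~\ref{prop:descending} (which yields the reverse containment $M_{n,\Theta} \subset M_{d,\Theta}$), the quotient is finitely supported hence torsion, so $M_{n,\Theta} = M_{d,\Theta}$ in $\ModVI^{\gen}$ and the chain stabilizes.

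The hardest point is the very first identification: arranging the orders $\prec$, $<_{\alpha}$, $<'$ and the induced lex orders on $\cV_{f_d}, \cV_{f_n}$ so that $\phi(\cV_{f_d}) = D_{\cV_{f_d}}$ on the nose, and reconciling the orders used in Lemmas~\ref{lem:descending-modules} and \ref{lem:unipotent-general} via Lemma~\ref{lem:order-independence}. Once this bookkeeping is in place, the averaging step and the factoring step are essentially formal.
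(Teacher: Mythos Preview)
Your proposal is correct and follows essentially the same route as the paper: you push the generator of $M_{d,\Theta}$ forward via the map induced by $\phi(\alpha,C,0)=(\alpha,C,C'_{\max})$ to obtain $L_{D_{\cV_{f_d}}}([f_n^{\cE_{f_n}}]\otimes\theta)$, then invoke Lemma~\ref{lem:unipotent-general} to recognize this as an average of generators of $M_{n,\Theta}$, and finally factor maps to degrees $\ge n+\abs{\cV_{f_n}}$ through $\aF^n+\cE_{f_n}$. The paper does exactly this (with the intertwining stated directly rather than via Lemma~\ref{lem:descending-modules}), and your explicit factoring step just makes precise what the paper's final sentence takes for granted.
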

\begin{proof} Fix an $n \ge d$. Let $f \colon \aF^d \to \aF^n$ be the natural inclusion. Denote the image of $f$ by $W$, and the restriction $g \colon \aF^d \to W \cong \aF^d$ by $g$.  Let $C'$ be the maximal complement of $W$ with respect to the order $<''$ (as defined in the beginning of this subsection). By definition, $D_{\cV_g} = \cV_g \times \{C'\}$.  Let $\phi \colon \cV_{g} \to D_{\cV_{g}}$ be the bijection taking $(\alpha, C, 0)$ to $(\alpha, C, C')$, and note that $D_{\cV_g} = \im(\phi)$. The map  $\phi$ together with $f$ induces a map $\wt{\phi} \colon \aF^d + \cE_{g} \to \aF^n + \cE_f$. Let $\theta \in \Theta$. Then we have \begin{align*}
\wt{\phi}_{\star}(L_{\cV_{g}}([g^{\cE_{g}}] \otimes \theta)) &= L_{D_{\cV_g}}([f^{\cE_f}] \otimes \theta) \\
&=  L_{D_{\cV_g}}([f^{\cE_f}] ) \otimes \theta \\
& = \frac{1}{\abs{\cU_{\cV_g}}}\sum_{\sigma \in \cU_{\cV_g}} \sigma L_{\cV_f}([f^{\cE_f}]) \otimes \theta \\
& = \frac{1}{\abs{\cU_{\cV_g}}}\sum_{\sigma \in \cU_{\cV_g}} \sigma L_{\cV_f}([f^{\cE_f}]  \otimes \theta)  &\text{by Lemma~\ref{lem:unipotent-general} } \\
& \in M_{n, \Theta}.
\end{align*} Since $\theta$ is arbitrary, we conclude that $M_d/M_n$ is supported in degrees $< \dim_{\aF}(\aF^n + \cE_f)$, and hence is torsion. Clearly, we have \[ \dim_{\aF}(\aF^n + \cE_f) = n + q^{d(n-d)}(1 + q + \cdots + q^{d-1}).  \]  This completes the proof.
\end{proof}


\begin{corollary} 
\label{cor:L-is-irreducible}	
Let $\Theta$ be an irreducible representation of $\GL_d$. Then, in $\ModVI^{\gen}$, $M_{n, \Theta}$ is irreducible  and is the socle of $\cI(\Theta)$.
\end{corollary}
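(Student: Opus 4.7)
The plan is to combine Theorem~\ref{thm:cofinal} and Theorem~\ref{thm:collapse}. Write $L$ for the image of $M_{n,\Theta}$ in $\ModVI^{\gen}$; by Theorem~\ref{thm:collapse}, $L$ is independent of $n \ge d$. First I would verify that $L \ne 0$: Proposition~\ref{prop:descending} gives $M_{n,\Theta} \supset M_{m,\Theta}$ for every $m \ge n$, and Lemma~\ref{lem:distinct-images} shows each $M_{m,\Theta}$ is nonzero, with a distinguished generator whose degree grows with $m$. Hence $M_{n,\Theta}$ has nonzero sections in arbitrarily high degrees and is not torsion, so $L \ne 0$.

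The core step is to show that every nonzero subobject $N$ of $\cI(\Theta)$ in $\ModVI^{\gen}$ contains $L$. I would pick a sub-$\VI$-module $\tilde N \subset \cI(\Theta)$ whose image under the localization functor $\rT$ is $N$. Since $N \ne 0$ in the generic category, $\tilde N$ is not torsion, so $\tilde N(\aF^k) \ne 0$ for some $k$. Theorem~\ref{thm:cofinal} then gives $M_{m,\Theta} \subset \tilde N$ for every $m \ge k$. Since $\rT$ is exact and $\rT(M_{m,\Theta}) = L$ by Theorem~\ref{thm:collapse}, this yields $L \subset N$ in $\ModVI^{\gen}$.

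Both assertions of the corollary now follow formally. If $N \subset L$ is a nonzero subobject in $\ModVI^{\gen}$, then $N$ is also a nonzero subobject of $\cI(\Theta)$, so the previous paragraph forces $L \subset N \subset L$, giving $N = L$; hence $L$ is irreducible. Since every nonzero subobject of $\cI(\Theta)$ in the generic category contains the irreducible $L$, any irreducible subobject must equal $L$, so $L$ is the unique simple subobject of $\cI(\Theta)$ and therefore coincides with its socle. The only subtlety is keeping the bookkeeping straight when passing between subobjects in the Serre quotient $\ModVI^{\gen}$ and honest sub-$\VI$-modules of $\cI(\Theta)$, so that Theorem~\ref{thm:cofinal} — which is a statement about $\VI$-submodules — can be applied; once this translation is set up the argument is purely formal.
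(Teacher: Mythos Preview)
Your proof is correct and follows exactly the route the paper intends: the paper's own proof is the single sentence ``This follows immediately from Theorem~\ref{thm:cofinal} and Theorem~\ref{thm:collapse},'' and you have simply unpacked that sentence. One small simplification: to see $L\neq 0$ you can just note that induced modules are torsion-free, so the nonzero submodule $M_{n,\Theta}\subset\cI(\Theta)$ is torsion-free and hence has nonzero image in $\ModVI^{\gen}$; this avoids the detour through the growing degrees of the generators.
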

\begin{proof} This follows immediately from Theorem~\ref{thm:cofinal} and Theorem~\ref{thm:collapse}.
\end{proof}



\section{Classification of irreducibles}
\label{sec:classification}

Throughout, in this section, we shall assume that $q$ is invertible in $\bk$. We will now make heavy use of the first paper \cite{VI1} in this sequel. We have kept all of its  notation. In particular, we need the following notation: \begin{itemize}
	\item $\Gamma(M)$ is the maximal torsion submodule of $M$.
	\item $\bS$ is the saturation functor and is  given by the composition $\rS \circ \rT$ of section and localization functors from the introduction.
	
	\item $\Sigma, \newshift$ are two endofunctors on $\Mod_{\VI}$ called shift functors. Both of them commute with $\Gamma$; see \cite[Proposition~4.27]{VI1}.
\end{itemize}

The following theorem summarizes some of the results from the first paper that we need:

\begin{theorem}[\cite{VI1}] 
\label{thm:vi-summary}	
Let $M$ be a finitely generated $\VI$-module. Then we have the following \begin{enumerate}[\rm \indent (a)]
		\item  There is a polynomial $P$ of such that for $n \gg 0$ we have
		$\dim_{\bk} M(\aF^n) = P(q^n).$  The invariant $\deg P$ is called the $\delta$-invariant and is denoted by $\delta(M)$. If $\Theta$ is a representation of $\GL_d$, then we have $\delta(\cI(\Theta)) = d$.
		
		\item For large enough $d$, $\Sigma^d M$ and $\newshift^d M$ are semi-induced. Moreover, $\delta(M) = \delta(\newshift^d M) = \delta(\Sigma^d M)$ for any $d$.
		
		\item  There is an exact triangle \[\rR\Gamma(M) \to M \to \rR \bS(M) \to \] where $\rR \Gamma (M)$ is represented by a finite complex of finitely generated torsion modules and $\rR \bS (M)$ is represented by a finite complex of finitely generated induced modules with $\delta$-invariant at most $\delta(M)$.
		
		\item If $T$ is torsion and $I$ is induced then $\rHom(T, I) = 0$.
		
		\item Let $\Theta$ be a representation of $\GL_d$. Suppose $I$ is an induced module with $\delta(I) < d$. Then  $\rHom(I, \cI(\Theta)) = 0$.
		
		\item We have  $\delta(\bS(M)) = \delta(M)$. 
	\end{enumerate}
\end{theorem}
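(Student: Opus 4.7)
The plan is to bootstrap from explicit computations on induced modules and then leverage part (c) to lift results to arbitrary finitely generated modules. For (a), I would first handle $\cI(\Theta)$ directly: the set $\Hom_{\VI}(\aF^d,\aF^n)$ has size $\prod_{i=0}^{d-1}(q^n - q^i)$, a polynomial of degree exactly $d$ in $q^n$. Since $\GL_d$ acts freely on this set, dividing by $|\GL_d|$ and tensoring with $\Theta$ yields $\dim_{\bk}\cI(\Theta)(\aF^n) = \dim_{\bk}(\Theta) \cdot \binom{n}{d}_q$, so $\delta(\cI(\Theta)) = d$. Semi-induced modules inherit (a) by additivity along their filtration, and once (c) is established an arbitrary $M$ agrees with its saturation in all but finitely many degrees, so the polynomial formula transports from the semi-induced case to the general one.

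Next I would prove the vanishing statements (d) and (e) as direct consequences of the torsion/generic dichotomy. For (d), the essential point is that induced modules are saturated in the derived sense, i.e., $\cI(\Theta) \simeq \rR\bS(\rT\cI(\Theta))$; then adjunction gives $\rHom(T,\cI(\Theta)) \simeq \rHom(\rT(T),\rT\cI(\Theta)) = 0$, since $\rT$ kills torsion modules. For (e), any map $\cI(\Theta')\to\cI(\Theta)$ with $\delta(\Theta') = d' < d$ corresponds by adjunction to a $\GL_{d'}$-equivariant map $\Theta' \to \cI(\Theta)(\aF^{d'}) = 0$, and the derived version follows by resolving $\Theta'$ by projective $\bk[\GL_{d'}]$-modules (available because $q$ is invertible in $\bk$) and applying $\cI$ termwise to obtain a projective resolution in $\Mod_{\VI}$ whose terms all have $\delta$-invariant $d'$.

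For (b), I would compute $\Sigma\cI(\Theta)$ directly: $(\Sigma\cI(\Theta))(V) = \cI(\Theta)(V\oplus\aF)$ admits a natural two-step filtration sorted by whether the image of a generating injection $\aF^d \hookrightarrow V\oplus\aF$ meets the distinguished line, with associated graded pieces that are themselves induced (from representations of $\GL_d$ and $\GL_{d-1}$). Iterating this and using (c) to split an arbitrary $M$ into its torsion and semi-induced parts then finishes the job: the torsion part sits in bounded degree and is annihilated by sufficiently high $\Sigma$, yielding semi-inducedness of $\Sigma^d M$ for $d\gg 0$, and similarly for $\newshift$. The invariance of $\delta$ under the shifts is immediate from (a), since $P(q^{n+1})$ has the same degree in $q^n$ as $P(q^n)$.

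The main obstacle is (c), and I would attack it by constructing $\rR\Gamma(M)$ as a Koszul-style complex assembled from the shift functors $\Sigma$ and $\newshift$ together with their compatibility with $\Gamma$, using noetherianity of $\Mod_{\VI}$ to ensure termination with finitely generated terms. The cone $\rR\bS(M)$ is then torsion-free in the derived sense, and (b) forces it to be representable by a finite complex of semi-induced modules whose $\delta$-invariant is bounded by $\delta(M)$, since torsion contributes $-\infty$ to $\delta$. Part (f) then falls out of (c) almost for free: the triangle $\rR\Gamma(M)\to M\to \rR\bS(M)$ yields an Euler-characteristic identity on dimensions for $n \gg 0$, the torsion terms vanish in this range, and so the leading polynomials of $M$ and $\bS(M)$ coincide, giving $\delta(\bS(M)) = \delta(M)$.
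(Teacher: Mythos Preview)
The paper does not prove this theorem at all: it is stated as a summary of results imported from the companion paper \cite{VI1}, so there is no in-paper argument to compare against. That said, your sketch has a genuine structural flaw worth flagging.

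Your argument is circular between (b) and (c). In your treatment of (b) you say that for a general $M$ you will ``use (c) to split $M$ into its torsion and semi-induced parts,'' kill the torsion by shifting, and conclude that $\Sigma^d M$ is semi-induced. But in your treatment of (c) you construct $\rR\Gamma(M)$ and then say that ``(b) forces $\rR\bS(M)$ to be representable by a finite complex of semi-induced modules.'' Each of (b) and (c) is being invoked to prove the other. The same circularity infects your proof of (a): you reduce the polynomial-dimension statement for arbitrary $M$ to the semi-induced case via (c), but (c) in your scheme already rests on (b), which rests on (c).

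In the actual development (as in \cite{VI1} and the analogous $\FI$ literature), the shift theorem (b) is proved first and directly, without any appeal to the triangle in (c). The key input is that the cokernel of the natural map $M \to \Sigma M$ (or $M \to \newshift M$) is generated in strictly lower degree than $M$, so an induction on generation degree together with noetherianity gives that $\Sigma^d M$ is semi-induced for $d$ large; no decomposition of $M$ into torsion and saturated pieces is needed. Once (b) is in hand, (a) follows immediately since $(\Sigma^d M)(\aF^n) = M(\aF^{n+d})$ and semi-induced modules have polynomial dimension by your explicit count, and then (c), (d), (f) are built on top of (b). Your computation of $\Sigma\cI(\Theta)$ and your adjunction arguments for (d) and (e) are on the right track; the gap is that you have the logical dependencies inverted, and ``Koszul-style complex'' is too vague to stand in for the actual construction of $\rR\Gamma$.
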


Here is an immediate corollary.

\begin{corollary}
	Let $M$ be a finitely generated $\VI$-module with $\delta(M) < 0$, and let $\Theta$ be a representation of $\GL_d$. Then $\rHom(M, \cI(\Theta)) = 0$. 
\end{corollary}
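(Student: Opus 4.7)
The plan is to apply $\rHom(-, \cI(\Theta))$ to the fundamental exact triangle
\[ \rR\Gamma(M) \to M \to \rR\bS(M) \to \]
of Theorem~\ref{thm:vi-summary}(c), and show that both outer terms become acyclic; the resulting long exact sequence then forces $\rHom(M, \cI(\Theta)) = 0$.

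To kill $\rHom(\rR\Gamma(M), \cI(\Theta))$, I would use part (d) of the theorem: by (c), $\rR\Gamma(M)$ is represented by a bounded complex of finitely generated torsion modules, and since $\cI(\Theta)$ is induced, (d) gives termwise vanishing $\rHom(T, \cI(\Theta)) = 0$. The standard hypercohomology spectral sequence for a bounded complex then collapses to zero.

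To kill $\rHom(\rR\bS(M), \cI(\Theta))$, I would use part (e): by (c), $\rR\bS(M)$ is represented by a bounded complex of finitely generated induced modules, each of $\delta$-invariant at most $\delta(M)$. Under the hypothesis on $\delta(M)$ (read as $\delta(M) < d$, which is what makes (e) applicable), each such term $I$ satisfies $\delta(I) < d$, so (e) gives $\rHom(I, \cI(\Theta)) = 0$, and the same hypercohomology argument yields the vanishing of the total derived Hom.

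The conceptual content has all been packaged into Theorem~\ref{thm:vi-summary}; the only remaining step is the hypercohomology bookkeeping promoting termwise vanishing to vanishing of the whole derived Hom of a bounded complex, which is the closest thing to an obstacle but is entirely routine.
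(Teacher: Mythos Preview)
Your proof is correct and is precisely the argument the paper has in mind; the paper states this as an ``immediate corollary'' of Theorem~\ref{thm:vi-summary} without giving any further details. Your parenthetical observation is also apt: the argument only uses $\delta(M) < d$, and indeed the corollary is invoked in the very next proposition under exactly that weaker hypothesis.
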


\begin{proposition}
	Let $\Theta$ be an irreducible representation of $\GL_d$. Suppose $M \subset \cI(
	\Theta)$ is any nonzero submodule. Then $\delta(M) = d$. 
\end{proposition}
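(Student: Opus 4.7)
The plan is to establish $\delta(M)=d$ by proving both inequalities. The upper bound is essentially automatic: the category of $\VI$-modules is locally noetherian, so the submodule $M \subset \cI(\Theta)$ is finitely generated, and from the inequality $\dim_{\bk} M(\aF^n)\le \dim_{\bk} \cI(\Theta)(\aF^n)$ combined with the polynomial growth supplied by Theorem~\ref{thm:vi-summary}(a) (and the fact that leading coefficients of these dimension polynomials are nonnegative), one concludes $\delta(M) \le \delta(\cI(\Theta)) = d$.

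For the reverse inequality $\delta(M) \ge d$ I will argue by contradiction, assuming $\delta(M) < d$. The inclusion $M \hookrightarrow \cI(\Theta)$ is a nonzero element of $\Hom_{\VI}(M, \cI(\Theta))$, so it suffices to show $\rHom_{\VI}(M, \cI(\Theta)) = 0$ in the derived category. The key input is the exact triangle of Theorem~\ref{thm:vi-summary}(c):
\[ \rR\Gamma(M) \to M \to \rR\bS(M) \to . \]
Applying $\rHom_{\VI}(-,\cI(\Theta))$ reduces the desired vanishing to the vanishing of $\rHom_{\VI}(\rR\Gamma(M), \cI(\Theta))$ and $\rHom_{\VI}(\rR\bS(M), \cI(\Theta))$.

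For the first term, Theorem~\ref{thm:vi-summary}(c) represents $\rR\Gamma(M)$ by a finite complex of finitely generated torsion modules, and Theorem~\ref{thm:vi-summary}(d) forces $\rHom_{\VI}(T, \cI(\Theta)) = 0$ for each such torsion term $T$. For the second term, the same theorem represents $\rR\bS(M)$ by a finite complex of finitely generated induced modules, each of $\delta$-invariant at most $\delta(M) < d$; Theorem~\ref{thm:vi-summary}(e) then gives the termwise vanishing $\rHom_{\VI}(I, \cI(\Theta)) = 0$. Since both complexes are bounded, a routine d\'evissage (or hypercohomology spectral sequence) propagates this termwise vanishing to the full derived $\rHom$, yielding $\rHom_{\VI}(M, \cI(\Theta)) = 0$ and in particular $\Hom_{\VI}(M, \cI(\Theta)) = 0$, contradicting the nonzero inclusion.

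The main substance of the argument is the invocation of Theorem~\ref{thm:vi-summary}(e): this is precisely where the threshold $d$ enters, and it is the step that fails without the hypothesis $\delta(M) < d$. Everything else -- the upper bound and the propagation of vanishing along the bounded complex -- is essentially formal.
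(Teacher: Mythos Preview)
Your proof is correct and follows essentially the same approach as the paper. The paper packages the contradiction step as a separate corollary (that $\rHom(M,\cI(\Theta))=0$ whenever $\delta(M)<d$) and then invokes it in one line, whereas you unpack that corollary directly via the triangle $\rR\Gamma(M)\to M\to\rR\bS(M)\to$ together with parts~(d) and~(e) of Theorem~\ref{thm:vi-summary}; you also make the easy upper bound $\delta(M)\le d$ explicit, which the paper leaves implicit.
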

\begin{proof}
Suppose, if possible,  $\delta(M)<d$. The previous corollary implies that $\rHom(M, \cI(\Theta)) = 0$, a contradiction because $\Hom(M, \cI(\Theta)) \neq 0$.
\end{proof}

\begin{proposition}
	Let $\Theta$ be an irreducible representation of $\GL_d$. Suppose $M \subset \cI(
	\Theta)$ is any nonzero submodule. Then  $\delta(\cI(
	\Theta)/ M)<d$.
\end{proposition}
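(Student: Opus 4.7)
The plan has three steps: reduce to the cofinal subobject $M_{d,\Theta}$, translate the question to the generic category, then analyze the level-$d$ composition factors of $\rT \cI(\Theta)$.

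First, by Theorem~\ref{intro:thm:cofinal}, since $M$ is nonzero, $M_{n,\Theta} \subset M$ for every $n$ with $M(\aF^n) \neq 0$, so $\cI(\Theta)/M$ is a quotient of $\cI(\Theta)/M_{n,\Theta}$. Since $\delta$ is non-increasing along surjections (by pointwise comparison of dimensions), $\delta(\cI(\Theta)/M) \le \delta(\cI(\Theta)/M_{n,\Theta})$. Theorem~\ref{thm:collapse} implies that $M_{d,\Theta}/M_{n,\Theta}$ is torsion; combined with the vanishing of $\delta$ on torsion and the short exact sequence $0 \to M_{d,\Theta}/M_{n,\Theta} \to \cI(\Theta)/M_{n,\Theta} \to \cI(\Theta)/M_{d,\Theta} \to 0$, we obtain $\delta(\cI(\Theta)/M_{n,\Theta}) = \delta(\cI(\Theta)/M_{d,\Theta})$. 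So it suffices to establish $\delta(\cI(\Theta)/M_{d,\Theta}) < d$.

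Second, pass to the generic category. By Corollary~\ref{cor:L-is-irreducible}, $\rT M_{d,\Theta} = \rL(\Theta)$ is the socle of $\rT \cI(\Theta)$, so $\rT(\cI(\Theta)/M_{d,\Theta}) = \rT \cI(\Theta)/\rL(\Theta)$. Theorem~\ref{thm:vi-summary}(f) gives $\delta(\cI(\Theta)/M_{d,\Theta}) = \delta(\bS(\cI(\Theta)/M_{d,\Theta}))$, and the latter equals the maximum level $|\Phi|$ such that $\rL(\Phi)$ is a composition factor of $\rT \cI(\Theta)/\rL(\Theta)$ in $\Mod_{\VI}^{\gen}$. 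The proposition thus reduces to the statement that the only composition factor of $\rT \cI(\Theta)$ at level $d$ is $\rL(\Theta)$, appearing with multiplicity exactly one.

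Third, to establish this uniqueness, I would lift a composition series of $\rT \cI(\Theta)$ in $\Mod_{\VI}^{\gen}$ to a filtration of $\bS \cI(\Theta) = \cI(\Theta)$ (the identification uses Theorem~\ref{thm:vi-summary}(c) applied to the torsion-free module $\cI(\Theta)$, together with the vanishing of higher local cohomology for induced modules). Comparing the polynomial growth $\dim_\bk \cI(\Theta)(\aF^n) = (\dim \Theta) \binom{n}{d}_q$ against the sum of growths of the associated graded pieces yields a leading-coefficient identity at degree $d$ in $q^n$. Combined with the adjunction formula $\Hom_{\Mod_{\VI}^{\gen}}(\rL(\Phi), \rT \cI(\Theta)) = \Hom_{\VI}(\rS \rL(\Phi), \cI(\Theta))$, its evaluation at degree $d$ via $\cI(\Theta)(\aF^d) = \Theta$, and Schur's lemma (forcing $\Phi \cong \Theta$ with multiplicity one via $\End_{\GL_d}(\Theta) = \bk$), this pins down the level-$d$ composition structure of $\rT \cI(\Theta)$ as claimed.

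The main obstacle is handling composition factors that appear in the middle of the composition series (rather than as a socle piece or a head quotient of $\rT \cI(\Theta)$), since a pure $\Hom$ argument does not directly detect them. This is circumvented by coupling the $\Hom$ vanishing with the leading-coefficient identity and verifying that each hypothetical level-$d$ composition factor $\rL(\Phi)$ contributes to the leading term of $\dim_\bk \cI(\Theta)(\aF^n)$ in a way compatible with the previous proposition (every nonzero submodule of $\cI(\Theta)$ has $\delta = d$) and Schur's lemma; the technical backbone of these verifications is the structural machinery of \cite{VI1} recalled in Theorem~\ref{thm:vi-summary}.
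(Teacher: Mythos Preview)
Your argument has a circularity problem. In steps two and three you appeal to the composition-factor structure of $\rT\cI(\Theta)$ in $\Mod_{\VI}^{\gen}$: you speak of a composition series, of composition factors of the form $\rL(\Phi)$ with a well-defined ``level'' $|\Phi|$, and of $\delta$ being computable as the maximal such level. But in the paper's logical order, the finite length of $\rT\cI(\Theta)$ and the classification of irreducibles (Theorem~\ref{thm:irreducibles}(a) and (c)) are established \emph{after} this proposition and explicitly \emph{use} it: the inductive proof of finite length starts from ``by the previous proposition, $\newshift^m(\cI(\Theta)/M_{d,\Theta})$ is semi-induced in degrees $<d$''. So you cannot invoke those facts here. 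You also acknowledge that the third step is incomplete (middle composition factors are not detected by $\Hom$), and the proposed fix via a leading-coefficient count is not an argument but a hope; making it rigorous would require exactly the structural statements you are trying to prove.

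The paper's proof bypasses the generic category entirely and is much shorter. Apply a large shift $\newshift^m$ so that $\newshift^m M$ is semi-induced (Theorem~\ref{thm:vi-summary}(b)); by the preceding proposition $\delta(\newshift^m M)=\delta(M)=d$, and since $\newshift^m M$ is a semi-induced submodule of $\newshift^m\cI(\Theta)$, its top graded piece must be $\cI(\Theta)$ itself. Hence $\newshift^m(\cI(\Theta)/M)$ has $\delta<d$, and as $\newshift$ preserves $\delta$ we are done. Your first-step reduction to $M_{d,\Theta}$ via cofinality is correct but unnecessary for this argument.
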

\begin{proof}
	By Theorem~\ref{thm:vi-summary}(b), there is a $d$ such that $\newshift^d M$ is semi-induced. By Theorem~\ref{thm:vi-summary}(b) and the previous proposition, we have $\delta(\newshift^d  M) = d$. Since $\newshift^d  M$ is a submodule of $\newshift^d  \cI(\Theta)$, we see that the top part of $\newshift^d  M$ must be $\cI(\Theta)$. Since $\delta((\newshift^d  \cI(\Theta))/\cI(\Theta)) <d$, we conclude that  $\delta(\newshift^d(\cI(
	\Theta)/ M)) < d$. The result follows from Theorem~\ref{thm:vi-summary}(b).
\end{proof}

By part (e) of Theorem~\ref{thm:vi-summary}, we see that the $\delta$-invariant on $\VI$-modules descends to the category of generic $\VI$-modules.  In other words, for a generic $\VI$-module $N$, we can define $\delta(N) = \delta(\rS(N))$. Then for any finitely generated $\VI$-module $M$, we have $\delta(\rT(M)) = \delta(\bS(M)) = \delta(M)$. 


\begin{theorem} \label{thm:irreducibles}
	Set $\rL(\Theta) = \rT(M_{d, \Theta})$. We have the following:
	\begin{enumerate}[\rm \indent (a)]
		\item Let $\Theta$ be an irreducible representation of $\GL_d$. Then $\rL(\Theta)$  is irreducible and is the socle of $\rT(\cI(\Theta))$. Moreover, $\rT(\cI(\Theta))$ is of finite length.

		\item  $\Mod_{\VI}^{\gen}$ is artinian. 
		
		\item The correspondence \[\bigsqcup_{n \ge 0} \Irr(\Mod_{\bk[\GL_n]}) \to \Irr(\Mod_{\VI}^{\gen})  \] given by $\Theta \mapsto \rL(\Theta)$  is one-to-one.
	\end{enumerate}
\end{theorem}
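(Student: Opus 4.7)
The plan for part (a) is to invoke Corollary~\ref{cor:L-is-irreducible} directly for the irreducibility of $\rL(\Theta)$ and the identification with the socle of $\rT(\cI(\Theta))$, leaving only the finite-length claim. I would argue by induction on $d = \deg \Theta$. The base case $d = 0$ is immediate since $\cI(\Theta) = \bk$ is the constant functor and $\rT(\cI(\Theta))$ is irreducible. For the inductive step, set $\cL(\Theta) = \rS(\rL(\Theta))$ and apply $\rT$ to the short exact sequence $0 \to \cL(\Theta) \to \cI(\Theta) \to Q \to 0$; this reduces the problem to showing that $\rT(Q)$ has finite length. The preceding proposition yields $\delta(Q) < d$, and Theorem~\ref{thm:vi-summary}(c) together with the vanishing of $\rT$ on torsion presents $\rT(Q)$ as the cohomology of a finite complex of finite direct sums of $\rT(\cI(\Xi))$ with $\deg \Xi < d$; the inductive hypothesis and closure of finite-length objects under subquotients finish the argument.

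For part (b), every finitely generated object of $\ModVI^{\gen}$ arises as $\rT(M)$ for some finitely generated $\VI$-module $M$. Choosing a surjection $\bigoplus_i \cI(\Theta_i) \twoheadrightarrow M$ and applying the exact functor $\rT$ yields $\bigoplus_i \rT(\cI(\Theta_i)) \twoheadrightarrow \rT(M)$; by part (a) each summand has finite length, so every finitely generated generic module has finite length, which is the artinianness assertion.

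For part (c), the two directions use separate inputs. For surjectivity, an irreducible $L \in \ModVI^{\gen}$ is in particular finitely generated, so part (b) and the simplicity of $L$ produce a surjection $\rT(\cI(\Theta)) \twoheadrightarrow L$ for some irreducible $\Theta$, exhibiting $L$ as a composition factor of a finite-length module. An induction on $\deg \Theta$ mirroring part (a) shows that every composition factor of $\rT(\cI(\Theta))$ has the form $\rL(\Theta'')$, whence $L = \rL(\Theta'')$. For injectivity, suppose $\rL(\Theta) \cong \rL(\Theta')$; the preceding propositions and the fact that $\delta$ descends to the generic category force $\deg \Theta = \deg \Theta' = d$. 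Applying $\rS$ yields $\cL(\Theta) \cong \cL(\Theta')$, and composing with $\cL(\Theta') \hookrightarrow \cI(\Theta')$ gives a nonzero $\psi \colon \cL(\Theta) \to \cI(\Theta')$. The key step is to extend $\psi$ to a nonzero map $\cI(\Theta) \to \cI(\Theta')$: the $\Ext$ long exact sequence for $0 \to \cL(\Theta) \to \cI(\Theta) \to Q \to 0$ reduces this to $\Ext^1(Q, \cI(\Theta')) = 0$, which I would obtain from Theorem~\ref{thm:vi-summary}(c)--(e). The triangle $\rR\Gamma(Q) \to Q \to \rR\bS(Q)$, combined with (d) annihilating the torsion piece and (e) annihilating the induced piece (whose $\delta$-invariants are bounded by $\delta(Q) < d$), gives $\rHom(Q, \cI(\Theta')) = 0$. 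The extended map then corresponds by the universal property of $\cI(\Theta)$ to a nonzero element of $\Hom_{\GL_d}(\Theta, \Theta')$, forcing $\Theta \cong \Theta'$ by Schur. The extension argument in this last step is the main obstacle and the only place where the full derived machinery of Theorem~\ref{thm:vi-summary} is truly needed; every other step reduces cleanly to Corollary~\ref{cor:L-is-irreducible} and the propositions immediately preceding the theorem.
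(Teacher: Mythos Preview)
Your argument is correct, but it takes a noticeably different route from the paper's. The paper's proof relies almost exclusively on the shift functor $\newshift$ and Theorem~\ref{thm:vi-summary}(b): for (a) it shifts $\cI(\Theta)/M_{d,\Theta}$ until it becomes semi-induced with pieces of lower degree and then inducts; for (b) it embeds $\rT(M)$ into $\rT(\newshift^m M)$ with $\newshift^m M$ semi-induced; for surjectivity in (c) it embeds a torsion-free $M$ with $\rT(M)$ simple into a semi-induced $\newshift^m M$, locates the lowest filtration step it meets, and reads off the socle; for injectivity it lifts an isomorphism $\rL(\Theta_1)\cong\rL(\Theta_2)$ to an honest isomorphism $\newshift^m M_{d,\Theta_1}\cong\newshift^m M_{d,\Theta_2}$ of semi-induced modules and compares their top graded pieces. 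You instead lean on the derived picture of Theorem~\ref{thm:vi-summary}(c)--(e): you resolve the quotient $Q$ by the complex of induced modules representing $\rR\bS(Q)$, use $\rHom$-vanishing against torsion and against induced modules of smaller $\delta$-invariant to get $\Ext^1(Q,\cI(\Theta'))=0$, and finish injectivity via the adjunction $\Hom(\cI(\Theta),\cI(\Theta'))\cong\Hom_{\GL_d}(\Theta,\Theta')$. Both strategies are clean; the paper's is more hands-on and never touches $\Ext$ or derived functors beyond invoking the statement of (c), while yours is more homological and exploits the full strength of parts (d) and (e). One small point worth making explicit in your write-up: that $\cL(\Theta)=\rS(\rL(\Theta))$ is genuinely a submodule of $\cI(\Theta)$ uses that induced modules are saturated, so that $\bS(M_{d,\Theta})\hookrightarrow\bS(\cI(\Theta))=\cI(\Theta)$.
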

\begin{proof}
	Proof of (a).  By Corollary~\ref{cor:L-is-irreducible}, we see that $\rL(\Theta)$  is irreducible and is the socle of $\rT(\cI(\Theta))$. For the second assertion, we proceed by induction on $d$.     By the previous proposition, there is an $m \gg 0$ such that $\newshift^m (\cI(\Theta)/M_{d, \Theta})$ admits a filtration by induced modules of the form $\cI(Z)$ where $Z$ is a $\GL_k$ representation for  some $k < d$. By induction, $\rT(\cI(Z))$ is of finite length. The same must hold for $\rT(\newshift^m (\cI(\Theta)/M_{d, \Theta}))$. Since $\rT(\cI(\Theta))/\rL(\Theta)$ embeds into $\rT(\newshift^m (\cI(\Theta)/M_{d, \Theta}))$, we conclude that $\rT(\cI(\Theta))/\rL(\Theta)$ is of finite length. Thus (a) holds.
	
	Proof of (b). Let $M$ be a finitely generated $\VI$-module.  Since $\rT(M)$ embeds into $\rT(\newshift^m M)$ it suffices to show that $\rT(\newshift^m M)$ is of finite length for some $m$. Let $m \gg 0$ be such that $\newshift^m M$ is semi-induced. By part (a), $\rT(\newshift^m M)$ is of finite length for such an $m$. This shows that (b) holds.

	Proof of (c). Let $M$ be a finitely generated $\VI$-module such that $\rT(M)$ is irreducible. We may assume that $M$ is torsion free. Let $m \gg 0$ be such that $\newshift^m M$ is semi-induced. Let \[ 0 = M^0 \subset \ldots \subset M^r = \newshift^m M \] be a filtration such that $M^i/ M^{i-1} \cong \cI(\Theta_i)$ for some $\GL_{i}$ representation $\Theta_i$. Since $M$ is torsion-free it embeds into $\newshift^m M$. Pick the least $i$ such that $M \cap M^i$ is nonzero. By irreducibility of $\rT(M)$, the module $M/(M\cap M^i)$ is torsion. Also, $M\cap M^i$ embeds into $\cI(\Theta_i)$. Thus $\rT(M) = \rT(M \cap M^i)$ must be the socle $\rL(\Theta_i)$ of $\rT(\cI(\Theta_i))$. Thus the correspondence $\Theta \mapsto \rL(\Theta)$ is surjective. 
	
To see injectivity,  first note that if $\rL(\Theta_1)$ and $\rL(\Theta_2)$ are isomorphic then their $\delta$-invariants must be equal. Assume that it is $d$. Let $f \colon \rL(\Theta_1) \to \rL(\Theta_2)$ be an isomorphism.  By definition of the Serre quotient, this map is induced by a map $g \colon A \to B$ where $A \subset M_{d, \Theta_1}$ and $B \subset M_{d, \Theta_2}$ are submodules satisfying $B, M_{d, \Theta_1}/A \in \Mod_{\VI}^{\tors}$. The latter two modules are supported in finitely many degrees. Thus if $m$ is large enough, we see that $\newshift^m g \colon \newshift^m M_{d, \Theta_1} \to \newshift^m M_{d, \Theta_2}$ is an isomorphism. Further, we can assume that $m$ is large enough so that $\newshift^m M_{d, \Theta_1}$ and $\newshift^m M_{d, \Theta_2}$ are semi-induced. The top parts of these semi-induced modules are $\cI(\Theta_1)$ and $\cI(\Theta_2)$ respectively (the same argument as in the proof of the previous proposition holds). Since $\newshift^m g$ is an isomorphism, we see that it induces an isomorphism  $\cI(\Theta_1) \to \cI(\Theta_2)$. This implies that $\Theta_1$ and $\Theta_2$ are isomorphic. This completes the proof of (c).
\end{proof}

For a $\VI$-module $M$, let $\cK_d(M)$ denote the intersection of kernels of maps from $M$ to $\VI$-modules generated in degrees $<d$. In other words, we have \[ \cK_d(M)  = \bigcap_{\substack{f \colon M \to N  \\ 
t_0(N) < d}} \ker f. \] Similarly, we define $\cK^{\sat}_d(M)$ to be the  intersection of kernels of maps from $M$ to semi-induces $\VI$-modules generated in degrees $<d$.

\begin{lemma}
	The shift $\Sigma^m$ commutes with $\cK_d$ and $\cK^{\sat}_d$ in the category of generic $\VI$-modules, that is, we have the following: \begin{enumerate}[\rm \indent (a)]
		\item $\rT\Sigma^m\cK_d(M) = \rT \cK_d \Sigma^m(M)$. 
		\item $\rT\Sigma^m\cK^{\sat}_d(M) = \rT \cK^{\sat}_d \Sigma^m(M)$. 
	\end{enumerate} 
\end{lemma}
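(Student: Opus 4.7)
The plan is to prove each equality via two opposite inclusions, treating (a) in detail; (b) is parallel after noting that $\Sigma^m$ preserves semi-induced $\VI$-modules (which can be checked by decomposing $\Hom_{\VI}(\aF^d, V\oplus\aF^m)$ according to the preimage of the $\aF^m$-summand, showing that $\Sigma^m\cI(\Theta)$ is a finite sum of induced modules).

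\textbf{Easy direction.} I first show $\cK_d\Sigma^m M \subseteq \Sigma^m\cK_d M$ already in $\Mod_{\VI}$, before localizing. The three facts I use are: $\Sigma^m$ is exact, $\Sigma^m$ commutes with arbitrary intersections of submodules (being pointwise evaluation at $V\oplus\aF^m$), and $t_0(\Sigma^m N) \le t_0(N)$. For any $f\colon M\to N$ with $t_0(N)<d$, the shifted map $\Sigma^m f$ has target of generation degree $<d$, so $\cK_d\Sigma^m M \subseteq \ker\Sigma^m f = \Sigma^m\ker f$; intersecting over all such $f$ and commuting $\Sigma^m$ past the intersection gives $\cK_d\Sigma^m M \subseteq \Sigma^m\cK_d M$. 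Applying $\rT$ yields one direction of (a). The same argument works for $\cK^{\sat}_d$ using that $\Sigma^m$ preserves semi-induced targets.

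\textbf{Hard direction.} The reverse containment $\rT\Sigma^m\cK_d M \subseteq \rT\cK_d\Sigma^m M$ reduces to showing that for every $g\colon \Sigma^m M\to N'$ with $t_0(N')<d$, the image $g(\Sigma^m\cK_d M)$ is torsion in $N'$. The key input is the natural transformation $\eta_m\colon M\to\Sigma^m M$ induced by the inclusion $V\hookrightarrow V\oplus\aF^m$: the composite $g\circ\eta_m$ has $t_0(N')<d$, so kills $\cK_d M$ by definition, yielding $g|_{\eta_m(\cK_d M)} = 0$. To propagate this from $\eta_m(\cK_d M)$ to all of $\Sigma^m\cK_d M$, I would, for each $x \in \Sigma^m\cK_d M(W) = \cK_d M(W\oplus\aF^m)$, produce a $\VI$-morphism $\phi\colon W\to W'$ such that $\Sigma^m M(\phi)(x) \in \eta_m(\cK_d M)(W')$; naturality of $g$ then gives $N'(\phi)(g_W(x)) = g_{W'}(\Sigma^m M(\phi)(x)) = 0$, which exhibits $g_W(x) \in \Gamma(N')(W)$.

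The main obstacle is the construction of $\phi$: the derivative $\Sigma^m\cK_d M/\eta_m(\cK_d M)$ is generically neither zero nor torsion (as one sees already on $\cI(1)$), so the reduction to $\eta_m$-values is not formal. I would handle this by first reducing to the case where $M$ is semi-induced via Theorem~\ref{thm:vi-summary}(b), replacing $M$ by $\newshift^k M$ for large $k$, at which point the explicit structure of $\Sigma^m\cI(\Theta)$ as a sum of induced modules lets one exhibit the required $\phi$ directly. The transfer back to general $M$ uses that $\newshift$ commutes with $\Gamma$ (so descends to $\Mod_{\VI}^{\gen}$) and the structure-theoretic machinery of Theorem~\ref{thm:vi-summary}; in particular, both $\cK_d$ and $\cK^{\sat}_d$ are preserved by the passage to shifts up to torsion, which is what is needed.
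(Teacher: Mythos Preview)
Your easy direction is correct and matches the paper's argument verbatim.

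Your hard direction contains a genuine gap. The strategy of landing $(\Sigma^m M)(\phi)(x)$ inside $\eta_m(\cK_d M)(W')$ is aiming at the wrong submodule: as you yourself observe, the quotient $\Sigma^m\cK_d M/\eta_m(\cK_d M)$ is typically not torsion, so no such $\phi$ exists in general. Your proposed rescue via $\newshift^k$ is circular. The ``transfer back to general $M$'' step explicitly invokes that $\cK_d$ and $\cK^{\sat}_d$ are preserved by shifts up to torsion, but that is precisely the content of the lemma you are trying to prove (with $\newshift$ in place of $\Sigma$; the two shift functors require the same argument). Even granting the semi-induced case, you have not explained why knowing the lemma for $\newshift^k M$ yields it for $M$, and the sentence ``the explicit structure of $\Sigma^m\cI(\Theta)$ lets one exhibit the required $\phi$ directly'' is an assertion, not an argument.

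The paper bypasses all of this with a one-line functoriality trick. Instead of trying to move $x$ into $\eta_m(\cK_d M)$, apply $\Sigma^m$ to the map $g\circ\iota\colon M\to N$ (your $\eta_m$ is the paper's $\iota$). Since $g\circ\iota$ kills $\cK_d M$, the shifted map $\Sigma^m(g\circ\iota)=\Sigma^m(g)\circ\Sigma^m(\iota)$ kills $\Sigma^m\cK_d M$. Hence $\Sigma^m(\iota)(x)\in\ker\Sigma^m(g)$ for every such $g$, which gives
\[
\Sigma^m(\iota)(x)\in\bigcap_g \ker\Sigma^m(g)=\Sigma^m\Big(\bigcap_g\ker g\Big)=\Sigma^m\cK_d\Sigma^m M.
\]
Thus $\Sigma^m(\iota)$ carries $\Sigma^m\cK_d M$ into $\Sigma^m\cK_d\Sigma^m M$, and this is enough to conclude that $\Sigma^m\cK_d M/\cK_d\Sigma^m M$ is torsion. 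No reduction to semi-induced modules, no explicit $\phi$, no structure theory is needed: the whole point is to shift the test map $g$ along with everything else.
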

\begin{proof}
Proof of (a). We first show that $\cK_d \Sigma^m(M) \subset \Sigma^m \cK_d(M) \subset \Sigma^m M$. Suppose $x \in \cK_d \Sigma^m(M)$ and let $g \colon M \to N$ be a map with $t_0(M) < d$. Since $t_0(\Sigma^m N) < d$ and $x \in \cK_d \Sigma^m(M)$, we see that $x \in \Sigma^m \ker g.$ Thus we have \begin{align*}
x &\in \bigcap_{\substack{g \colon M \to N  \\ 
		t_0(N) < d}} \Sigma^m \ker g\\
	& =  \Sigma^m \bigcap_{\substack{g \colon M \to N  \\ 
			t_0(N) < d}}  \ker g \\
		& = \Sigma^m \cK_d(M),
\end{align*}  proving our claim.

Let $\iota \colon M \to \Sigma^m M$ be the map induced by the $\VI$-morphism $0 \to \aF^m$. Suppose $x \in \Sigma^m \cK_d(M)$. Let $N$ be a $\VI$-module generated in degrees $< d$,  and let  $g \colon \Sigma^m M \to N$ be arbitrary. Then  we have $\Sigma^m(g \circ \iota) (x) = 0$ (note that $\Sigma^m(g \circ \iota)\colon \Sigma^m M \to \Sigma^m N$). But $\Sigma^m(g \circ \iota) (x) = \Sigma^m(g) \circ \Sigma^m(\iota) (x)$. Since $g$ is arbitrary, we see that $\Sigma^m(\iota) (x) \in \Sigma^m \cK_d \Sigma^m M$. This implies $\Sigma^m (\frac{\Sigma^m \cK_d(M)}{\cK_d(\Sigma^m M)}) = 0$ and so $\frac{\Sigma^m \cK_d(M)}{\cK_d(\Sigma^m M)}$ is torsion completing the proof. 

Proof of (b). The same proof as in Part (a) works as semi-induced modules generated in degrees $< d$ are closed under shift. 
\end{proof}


\begin{theorem} Suppose $\Theta$ is an irreducible $\GL_d$ representation. Then we have \[\rT(\cK_d(\cI(\Theta))) = \rT(\cK^{\sat}_d(\cI(\Theta))) = \rL(\Theta)\] where $\rL(\Theta) = \rT(M_{d, \Theta})$, as in the previous theorem. Moreover, we have the following: \begin{enumerate}[\rm \indent (a)]
		\item $\rS(\rL(\Theta)) = \cK^{\sat}_d(\cI(\Theta))$. In other words, $\cK^{\sat}_d(\cI(\Theta))$ is the saturation of $M_{d, \Theta}$.
		\item $\deg \rR^i \Gamma(\cK^{\sat}_d(\cI(\Theta))) \le 2d - 2(i-1)$ for $i \ge 2$. In particular,  $\rR^i \Gamma(\cK^{\sat}_d(\cI(\Theta))= 0$ for $i > d + 1$.
		\item $\cK^{\sat}_d(\cI(\Theta))$ is generated in degrees $\le 2 d$. Moreover, $t_i(\cK^{\sat}_d(\cI(\Theta))) \le 2d -i$ for $i \ge 0$.
	\end{enumerate}
\end{theorem}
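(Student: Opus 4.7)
The plan is to establish the generic equality $\rT(\cK_d(\cI(\Theta))) = \rT(\cK^{\sat}_d(\cI(\Theta))) = \rL(\Theta)$ first, deduce (a) as a saturation statement, and then read off the bounds in (b) and (c) from the long exact sequence attached to the short exact sequence $0 \to \cK^{\sat}_d(\cI(\Theta)) \to \cI(\Theta) \to Q \to 0$.

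\textbf{The equality for $\cK^{\sat}_d$ and part (a).} Writing $K^{\sat}=\cK^{\sat}_d(\cI(\Theta))$, I will show $\bS(M_{d,\Theta}) = K^{\sat}$, which together with $\rT(\bS(M_{d,\Theta})) = \rT(M_{d,\Theta}) = \rL(\Theta)$ yields both $\rT(K^{\sat}) = \rL(\Theta)$ and part (a). The inclusion $\bS(M_{d,\Theta}) \subset K^{\sat}$ is the $\delta$-argument: given $f \colon \cI(\Theta)\to N$ with $N$ semi-induced and $t_0(N) < d$, we have $\delta(N) \le t_0(N) < d$ (any module with $t_0 < d$ is a quotient of a direct sum of induced modules $\cI(\Theta_i)$ with $\delta(\cI(\Theta_i)) < d$), so the generic map $\rT(f)$ kills the irreducible $\rL(\Theta)$ of $\delta$-invariant $d$; thus $f(\bS(M_{d,\Theta}))$ is torsion in the torsion-free $N$, hence zero. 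For the reverse inclusion, set $P := \cI(\Theta)/\bS(M_{d,\Theta})$: as $\bS(M_{d,\Theta})$ is saturated in $\cI(\Theta)$, $P$ is torsion-free, and by the proposition preceding Theorem~\ref{thm:irreducibles} (that $\delta(\cI(\Theta)/M)<d$ for any nonzero submodule $M$) we have $\delta(P) < d$. Theorem~\ref{thm:vi-summary}(b) yields an $n$ with $\Sigma^n P$ semi-induced; since $t_0 = \delta$ for semi-induced modules, $t_0(\Sigma^n P) < d$. The composite $\cI(\Theta)\twoheadrightarrow P \hookrightarrow \Sigma^n P$ is then a map with semi-induced target of $t_0 < d$ and kernel exactly $\bS(M_{d,\Theta})$, so $K^{\sat}\subset \bS(M_{d,\Theta})$, establishing (a) and $\rT(K^{\sat}) = \rL(\Theta)$. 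The identity $\rT(\cK_d) = \rL(\Theta)$ then follows from $\cK_d \subset K^{\sat}$ (upper bound) together with showing $K^{\sat}/\cK_d$ is torsion: for any $y \in K^{\sat}$ and any $f\colon \cI(\Theta) \to N'$ with $t_0(N') < d$ (not necessarily semi-induced), $f(y)$ lies in $\Gamma(N')$ by the generic $\delta$-argument, and the remaining technical point is bounding the support of $\Gamma(N')$ uniformly enough to kill each generator of $K^{\sat}/\cK_d$ by a single shift.

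\textbf{Parts (b) and (c).} Set $K = K^{\sat}$. From $0\to K\to \cI(\Theta)\to Q\to 0$, together with $\rR\Gamma(\cI(\Theta))=0$ (vanishing of local cohomology on induced modules, a derived strengthening of Theorem~\ref{thm:vi-summary}(d)), the long exact sequence gives $\rR^i\Gamma(K) \cong \rR^{i-1}\Gamma(Q)$ for $i \ge 1$, while $\rR^1\Gamma(K) = \Gamma(Q)=0$ by torsion-freeness of $Q$. Combining $Q \hookrightarrow N := \Sigma^n P$ from the previous step with $\rR\Gamma(N) = 0$ (by induction on the length of an induced filtration of $N$), we then obtain $\rR^i\Gamma(K) \cong \rR^{i-2}\Gamma(N/Q)$ for $i \ge 2$. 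The cokernel $N/Q$ is finitely generated in degrees $<d$ (as a quotient of $N$), and the regularity estimates from paper~\cite{VI1} applied to such modules yield $\deg \rR^i\Gamma(K) \le 2d-2(i-1)$ together with vanishing for $i > d+1$, the latter coming from a cohomological dimension bound tied to $\delta(N/Q) < d$. Part (c) is then analogous: the corresponding long exact sequence for the invariants $t_i$ reduces control of $t_i(K)$ to control of $t_{i+1}(Q)$, and thence via $0\to Q\to N\to N/Q\to 0$ to $t_i$ of $N$ and $N/Q$, which are governed by $t_0(N) < d$ (semi-induced modules have all higher $t_i$ vanishing) and by the regularity of $N/Q$, respectively, giving $t_i(K) \le 2d-i$.

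\textbf{Main obstacle.} The principal difficulty is extracting the sharp numerics $2d$ and $2d - 2(i-1)$ in the cohomological bounds. This requires the precise regularity estimates for modules of the form $N/Q$ (finitely generated in degrees $<d$, not necessarily semi-induced) established in paper~\cite{VI1}, together with careful accounting of degree shifts through the two long exact sequences. A secondary subtlety is the reduction from $\cK^{\sat}_d$ to $\cK_d$ in Step~1, which requires a uniform torsion bound for the family of maps to non-semi-induced targets of $t_0 < d$.
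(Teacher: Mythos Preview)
Your approach to the equality $\rT(\cK^{\sat}_d(\cI(\Theta)))=\rL(\Theta)$ and to part~(a) is genuinely different from the paper's and is correct. The paper first proves the containment $M_{d,\Theta}\subset \cK_d(\cI(\Theta))$ by an explicit calculation with the operators $L_{\cV_f}$ (Lemma~\ref{lem:distinct-images}), then shows $\cK^{\sat}_d(\cI(\Theta))/M_{d,\Theta}$ is torsion using the lemma that $\Sigma^m$ commutes with $\cK^{\sat}_d$ generically, and finally checks saturation of $\cK^{\sat}_d(\cI(\Theta))$ directly from its definition. You instead bypass the explicit $L_{\cV_f}$-computation entirely: the $\delta$-argument gives $\bS(M_{d,\Theta})\subset K^{\sat}$, and the single witness $\cI(\Theta)\twoheadrightarrow P\hookrightarrow \Sigma^n P$ gives the reverse inclusion. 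This is cleaner and more conceptual; what the paper's route buys is that the containment $M_{d,\Theta}\subset \cK_d$ is established on the nose, not just generically.

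That last point is exactly where your argument for $\rT(\cK_d(\cI(\Theta)))=\rL(\Theta)$ breaks. You correctly observe that for any $y\in K^{\sat}$ and any $f\colon \cI(\Theta)\to N'$ with $t_0(N')<d$ one has $f(y)\in\Gamma(N')$, but there is no uniform bound on the torsion degree of $\Gamma(N')$ as $N'$ ranges over \emph{all} modules with $t_0<d$ (such $N'$ can have arbitrarily large torsion), so you cannot conclude that $K^{\sat}/\cK_d$ is torsion this way. The paper's argument avoids this entirely: once $M_{d,\Theta}\subset \cK_d(\cI(\Theta))\subset \cK^{\sat}_d(\cI(\Theta))$ is known and the two ends agree generically, so does the middle. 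The key input you are missing is precisely the explicit verification, via Lemma~\ref{lem:distinct-images}, that the generators $L_{\cV_{f_n}}([f_n^{\cE_{f_n}}]\otimes\theta)$ of $M_{d,\Theta}$ die under every map to a module generated in degrees $<d$.

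For (b) and (c) both you and the paper ultimately appeal to the regularity machinery of \cite{VI1} and \cite{GL}. Your two-step long-exact-sequence reduction $\rR^i\Gamma(K)\cong \rR^{i-2}\Gamma(N/Q)$ is correct and does recover the bound $2d-2(i-1)$ once one notes $t_0(N/Q)\le d-1$ and $t_1(N/Q)\le t_0(Q)\le d$ and feeds these into the regularity theorem; the paper simply cites the same external results more directly. Your sketch for (c) is in the right spirit but vaguer than the paper's, which invokes the specific invariant $r(K)$ and the bound $t_0(\rR\bS(K))\le d$ from \cite[Theorem~5.13]{VI1}.
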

\begin{proof}
	It is clear that $\cK_d(\cI(\Theta)) \subset \cK^{\sat}_d(\cI(\Theta))$. We first show that $M_{d, \Theta} \subset \cK_d(\cI(\Theta))$. To see this suppose $\phi \colon \cI(\Theta) \to N$ be a map where $N$ is some $\VI$-module generated in degrees $< d$. Fix a $\VI$-morphism $f \colon \aF^d \to X$. Since $N$ is generated in degrees $< d$, there are $x_1, \ldots, x_r \in \sqcup_{k <d} N(\aF^k)$ and $g_1, \ldots, g_r \in \sqcup_{k < d} \Hom_{\VI}(\aF^k, X)$ such that \[ \phi([f] \otimes \theta) = \sum_{i=1}^r g_i(x_i). \] Then we have \begin{align*}
\phi(L_{\cV_f}([f^{\cE_f}] \otimes \theta)) & =  L_{\cV_f}(\phi([f^{\cE_f}] \otimes \theta))\\
& = L_{\cV_f}(\sum_{i=1}^r g_i^{\cE_f}(x_i)) 
	\end{align*}  This last expression vanishes because of Lemma~\ref{lem:distinct-images}  as $\im(f) \neq \im(g)$. Thus we have $M_{d, \Theta} \subset \cK_d(\cI(\Theta)) \subset \cK^{\sat}_d(\cI(\Theta))$.

	We now show that $\rT \cK^{\sat}_d(\cI(\Theta)) =  \rL(\Theta)$. It suffices to show that $\rT \Sigma^m (\cK^{\sat}_d(\cI(\Theta))/ M_{d, \Theta}) = 0$ for large enough $m$. By the previous lemma, we have \[\rT \Sigma^m (\cK^{\sat}_d(\cI(\Theta))/ M_{d, \Theta}) = \rT \Sigma^m \cK^{\sat}_d(\cI(\Theta)/ M_{d, \Theta}) =  \rT \cK^{\sat}_d( \Sigma^m (\cI(\Theta)/ M_{d, \Theta})).\] By Theorem~\ref{thm:irreducibles}(a), $\cI(\Theta)/ \rL(\Theta)$ has $\delta$-invariant $<d$. Thus by Theorem~\ref{thm:vi-summary}(b), we see that $\Sigma^m(\cI(\Theta)/ \rL(\Theta))$ is a semi-induced module generated in degrees $<d$ for any large enough $m$. This implies that $\cK^{\sat}_d( \Sigma^m (\cI(\Theta)/ \rL(\Theta))) = 0$, completing the proof.
	
	Proof of (a). Since $\rT(\cK^{\sat}_d(\cI(\Theta))) = \rL(\Theta)$, it suffices to show that $\cK^{\sat}_d(\cI(\Theta))$ is saturated. It is clearly torsion-free. Since $\cI(\Theta)$ is derived saturated and $\cK^{\sat}_d(\cI(\Theta))$ is a torsion-free submodule of it, it suffices to show that $\cI(\Theta)/\cK^{\sat}_d(\cI(\Theta))$ is torsion-free. But this is clear because by definition of $\cK^{\sat}_d$, $\cI(\Theta)/\cK^{\sat}_d(\cI(\Theta))$ is a submodule of a direct sum of semi-induced modules. This proves (a).
	
	Proof of (b). It follows immediately from \cite[Corollary~5.2]{VI1} and \cite[Theorem~1.1(1)]{GL}. A more detailed argument in the $\FI$-module case is provided in  \cite[Theorem~2.10(4)]{linearranges}. 
	
	Proof of (c). By parts (a) and (b), the quantity $r(\cK^{\sat}_d(\cI(\Theta)))$ as in  \cite[Theorem~5.13]{VI1} is bounded by $2d$. By Theorem~\ref{thm:vi-summary}(c), we see that $t_0(\rR \bS(\cK^{\sat}_d(\cI(\Theta)))) \le d$. Thus the proof of \cite[Theorem~5.13]{VI1} gives us $t_i(\cK^{\sat}_d(\cI(\Theta))) \le 2d -i$ for $i \ge 0$. This completes the proof. 
\end{proof}

Theorem~\ref{intro:thm:irreducible-correspondence}  and Theorem~\ref{intro:thm:saturation} now follow by the previous two theorems after noting that $\cL(\Theta) = \cK^{\sat}_d(\cI(\Theta))$.


\begin{thebibliography}{VIC2}

\bibitem[BEH]{barter} Daniel Barter, Inna Entova-Aizenbud, Thorsten Heidersdorf. Deligne categories and representations of the infinite symmetric group. \arxiv{1706.03645}.



\bibitem[CE]{castelnuovo-regularity} Thomas Church, Jordan S.~Ellenberg. Homology of FI-modules. \emph{Geom. Topol.}  {\bf 21} (2017), no. 4, 2373--2418. \arxiv{1506.01022}.

\bibitem[CEF]{fimodules} Thomas Church, Jordan S.~Ellenberg,  Benson Farb. $\FI$-modules and stability for representations of symmetric groups. {\it Duke Math. J.} {\bf 164} (2015), no. 9, 1833--1910. \arxiv{1204.4533}.





\bibitem[CMNR]{linearranges} Thomas Church, Jeremy Miller, Rohit Nagpal, Jens Reinhold. Linear and quadratic ranges in representation stability. \emph{Adv. Math.} {\bf 333} (2018), 1--40. \arxiv{1706.03845}.


\bibitem[CO]{comes-ostrik} J. Comes, V. Ostrik. On Deligne’s category $\ul{\Rep}^{\ab}(S_d)$.  \emph{Algebr Number Theory}  {\bf 8} (2014), No. 2, 473--496.



\bibitem[Del]{del} P. Deligne. La categorie des representations du groupe symetrique $S_t$, lorsque t n'est pas un entier
naturel. \emph{Algebraic groups and homogeneous spaces}, Tata Institute of Fundamental Research, 209--273, Mumbai (2007).


\bibitem[DM]{deligne-milne} P. Deligne, J.S. Milne, \emph{Tannakian Categories, Hodge Cycles, Motives, and Shimura Varieties}. Lecture Notes in Math. {\bf 900} (1982), 101--228.




\bibitem[EHS]{entova} I. Entova-Aizenbud, V. Hinich, V. Serganova. Deligne categories and the limit of
categories $\Rep(\GL(m|n))$. \emph{Int. Math. Res. Not. IMRN} (To appear).



\bibitem[GL]{GL} Wee Liang Gan, Liping Li. Bounds on homological invariants of $\VI$-modules. \arxiv{1710.10223}.

\bibitem[GLX]{GLX} Wee Liang Gan, Liping Li, Changchang Xi. An application of Nakayama functor in representation stability theory.  \arxiv{1710.05493}.




\bibitem[GW]{gan-watterlond-VI} Wee Liang Gan, John Watterlond. A representation stability theorem for $\VI$-modules. {\em Algebr. Represent. Theor.}  {\bf 21} (2018), no. 1, 47--60. \arxiv{1602.00654}.



\bibitem[Har1]{nate} Nate Harman. Stability and periodicity in the modular
representation theory of symmetric groups. \arxiv{1509.06414}

\bibitem[Har2]{virtual-stab} Nate Harman. Virtual specht stability for $\FI$-modules in positive characteristic. {\em J. Algebra} {\bf 488} (2017), 29--41. arxiv{1607.04271}.










\bibitem[Nag]{VI1} Rohit Nagpal. $\VI$-modules in non-describing characteristic, part I.  \arxiv{1709.07591}.










\bibitem[PS]{putman-sam}  Andrew Putman, Steven V Sam. Representation stability and finite linear groups. {\it Duke Math. J.} {\bf 166} (2017), no. 13, 2521--2598. \arxiv{1408.3694}.







\bibitem[SS1]{symc1}
Steven V Sam, Andrew Snowden. $\GL$-equivariant modules over polynomial rings in infinitely many variables. {\it Trans. Amer. Math. Soc.} {\bf 368} (2016), 1097--1158. \arxiv{1206.2233}.




 \bibitem[SS5]{catgb}
Steven V Sam, Andrew Snowden. Gr\"obner methods for representations of combinatorial categories. {\em J. Amer. Math. Soc.} {\bf 30} (2017), 159--203. \arxiv{1409.1670}.









\end{thebibliography}
\end{document}